\documentclass[12pt]{amsart}

\usepackage[english]{babel}
\usepackage{a4wide}

\usepackage{amssymb,amsfonts}
\usepackage{mathrsfs}
\usepackage{graphicx}
\usepackage[all,cmtip]{xy}
\usepackage{tikz}
\usepackage{tikz-cd}
\tikzset{dynkdot/.style={circle,draw,scale=.38}}
\usepackage{cases}
\usepackage{mathtools}
\usepackage{arydshln}
\usepackage{multirow}
\usepackage{amscd}
\usepackage{pb-diagram}

\usepackage{bm}
\usepackage{bbm}
\usepackage{dsfont}

\usepackage{hyperref}
\usepackage[alphabetic]{amsrefs}

\newtheorem{theorem}{Theorem}[section]

\newtheorem{proposition}[theorem]{Proposition}
\newtheorem{lemma}[theorem]{Lemma}

\newtheorem{corollary}[theorem]{Corollary}

\theoremstyle{definition}
\newtheorem{definition}[theorem]{Definition}
\newtheorem{example}[theorem]{Example}
\newtheorem{proposition-definition}[theorem]{Proposition-Definition}
\newtheorem{definition-theorem}[theorem]{Definition-Theorem}

\theoremstyle{remark}
\newtheorem{remark}[theorem]{Remark}

\numberwithin{equation}{section}

\newcommand{\opname}[1]{\operatorname{\mathsf{#1}}}

\newcommand{\Hom}{\opname{Hom}}

\tikzcdset{scale cd/.style={every label/.append style={scale=#1}, cells={nodes={scale=#1}}}}

\usepackage{appendix}

\begin{document}
\sloppy

\title[]{Linear independence of global monomials \\ on positive spaces}


\author{Peigen Cao}
\address{School of Mathematical Sciences, University of Science and Technology of China, Hefei, 230026, People's Republic of China
}
\email{peigencao@126.com}

\thanks{P. C. was supported by the National Key R\&D Program of China
(2024YFA1013801).}


\dedicatory{}

\subjclass[2020]{13F60}

\date{}

\keywords{}

\begin{abstract}
In this paper, we prove that global monomials on positive spaces are linearly independent, extending the basic fact that Laurent monomials in a Laurent polynomial algebra are linearly independent to a much more general setting.  We also establish a global monomial avoidance phenomenon for positive spaces. Our approach is based on the study of Newton polytopes of Laurent expansions. These general results apply to positive spaces arising from cluster algebras (including the totally sign-skew-symmetric case), $Y$-patterns, and Laurent phenomenon algebras whose clusters are related by subtraction-free birational transformations. In particular, we obtain the proper Laurent monomial property and the linear independence of cluster monomials for all cluster algebras and Laurent phenomenon algebras under consideration. Notably, the proper Laurent monomial property follows from the global monomial avoidance phenomenon for positive spaces.
\end{abstract}

\maketitle
\tableofcontents

\section{Introduction}
\subsection{Cluster algebras}
The notion of cluster algebras was introduced by Fomin and Zelevinsky \cite{fz_2002}  as a combinatorial approach to the dual canonical bases (or upper global bases) of quantum groups and to the theory of total positivity in algebraic groups. They often arise as the coordinate rings of various spaces of Lie-theoretic origin, e.g., double Bruhat cells \cite{bfz_2005}, Grassmannians \cite{Scott-2006}, unipotent subgroups \cite{gls_2011}, braid varieties \cite{CGGLSS-2025,GLSB-2026}.

A cluster algebra is a subalgebra of a rational function field generated by a special set of generators called {\em cluster variables}, which are grouped into overlapping subsets,
called {\em clusters}. Different clusters are related by sequences of {\em mutations}, which are {\em subtraction-free} birational transformations. One key remarkable property of cluster algebras is the {\em Laurent phenomenon}, which
says that any cluster variable is a Laurent polynomial when written as a rational function
in any other cluster.

A {\em cluster monomial} is a monomial in variables from the same cluster. When Fomin and Zelevinsky \cite{fz_2002} invented cluster algebras, one of their hopes was that the cluster monomials would be part of a `canonical' basis. This naturally led them to the conjecture
that cluster monomials are linearly independent (see \cite[Conjecture 4.16]{fz-2003-notes}). This conjecture has been verified for skew-symmetric cluster algebras in \cite{cklp-2013} via categorification, and for skew-symmetrizable cluster algebras in \cite{ghkk18} via scattering diagrams. Both proofs reduce to verifying the so-called {\em proper Laurent monomial property} \cite{cl-2012}, which states that if a cluster monomial $u$ is not a cluster monomial in a cluster ${\bf x}_{t_0}$, then its Laurent expansion with respect to ${\bf x}_{t_0}$ is a linear combination of proper Laurent monomials.

Laurent phenomenon (LP) algebras, introduced by Lam and Pylyavskyy \cite{lp-2016}, are analogous to cluster algebras but permit more flexible mutation relations. It has been proved that Laurent phenomenon algebras also enjoy the Laurent phenomenon, and it is conjectured that their cluster monomials are linearly independent, analogous to the corresponding conjecture of Fomin and Zelevinsky for cluster algebras. For 
graph LP algebras, a subclass of LP algebras of finite type defined from directed graphs, it is proved \cite{dtw-2025} that the cluster monomials form a basis, which is analogous to a result for cluster algebras of finite type by Caldero and Keller \cite{ck-2008}.

\subsection{Positive spaces and main results}
Since the mutation relations in cluster algebras are subtraction-free, it naturally fits into the framework of {\em positive spaces}, which we will explain below. The cluster monomials in cluster algebras play the role of global monomials on positive spaces. In this general framework, the linear independence of cluster monomials can be reformulated as the linear independence of global monomials on positive spaces.

To introduce positive spaces, we need the notion of semifield. Recall that a {\em semifield} $(\mathbb P, \cdot, \oplus)$ is  an abelian multiplicative group $(\mathbb P,  \cdot)$ endowed with an auxiliary addition $\oplus$ which is commutative, associative and satisfies that the multiplication  distributes over the auxiliary addition. For example, $$\mathbb R^{\rm max} \coloneqq   (\mathbb R,+, \max\{-,-\})$$ 
is a semifield, which is called a {\em tropical semifield}.

Let $\mathbb Q_{\rm sf}({\bf z})$ be the
set of all non-zero rational functions in ${\bf z}=(z_1, \ldots, z_r)$ that have subtraction-free expressions, namely, $\mathbb Q_{\rm sf}({\bf z})=\{\frac{F}{H}\mid 0\neq F,H\in\mathbb Z_{\geq 0}[z_1,\ldots,z_r]\}$. The set $\mathbb Q_{\rm sf}({\bf z})$  is a semifield
with respect to the usual operations of multiplication and addition. This semifield is called a {\em universal semifield}.

\begin{definition}[{\cite[Definition 2.3]{cdsgl-2023}, \cite[Section 1.1]{fg-2009}}]
    A {\em positive space} of dimension $r$ 
is a pair $\mathbb V = (\mathbb F, \,\{{\bf z}_t\}_{t \in \mathbb T})$, where
\begin{itemize}
    \item $\mathbb F$ is the field of rational functions in $r$ variables over $\mathbb Q$;
    \item $\mathbb T$ is any non-empty index set;
    \item $\{{\bf z}_t\}_{t \in \mathbb T}$ is an assignment of an ordered
set ${\bf z}_t$ of free generators of $\mathbb F$ over $\mathbb Q$ to each $t \in \mathbb T$, such that $\mathbb Q_{\rm sf}({\bf z}_t) = 
\mathbb Q_{\rm sf}({\bf z}_{t'}) \subseteq \mathbb F$ for all $t, t' \in \mathbb T$. 
\end{itemize}
We also write $\mathbb F = \mathbb F(\mathbb V)$ to indicate that $\mathbb F$ is the ambient field for the positive space $\mathbb V$.
\end{definition}
For a positive space $\mathbb V = (\mathbb F(\mathbb V), \{{\bf z}_t\}_{t \in \mathbb T})$, 
we call each ${\bf z}_t$ a {\em chart} of $\mathbb V$.  The elements $z_{1;t},\ldots,z_{r;t}$ in a chart ${\bf z}_t$ are called {\em coordinate variables}. 

\begin{definition}Let $\mathbb V = (\mathbb F(\mathbb V), \{{\bf z}_t\}_{t \in \mathbb T})$ be a positive space.
\begin{itemize}
    \item[(i)] The {\em coordinate algebra} $\mathcal U(\mathbb V)$ of $\mathbb V$ is defined to be the intersection of the following Laurent polynomial algebras:
    $$
    \mathcal U(\mathbb V)\coloneqq \bigcap_{t\in\mathbb T}\mathbb Z[z_{1;t}^{\pm 1},\ldots,z_{r;t}^{\pm 1}].$$ Elements in $\mathcal U(\mathbb V)$ are called the {\em global functions} on $\mathbb V$.
    \item[(ii)] The {\em ambient semifield} of $\mathbb V$ is defined as $\mathbb F_{>0}(\mathbb V) \coloneqq \mathbb Q_{\rm sf}({\bf z}_t)$  using any $t \in \mathbb T$.
    \item [(iii)] A {\em tropical point} on $\mathbb V$ is a semifield homomorphism $\beta:\mathbb F_{>0}(\mathbb V)\to \mathbb R^{\rm max}$. Denote by
    \[
    \mathbb V(\mathbb R^{\rm max})\coloneqq {\rm Hom}_{\rm sf}(\mathbb F_{>0}(\mathbb V), \, \mathbb R^{\rm max})
    \]
    the set of tropical points on $\mathbb V$.
\end{itemize}    
\end{definition}
We have the following facts.
\begin{itemize}
 \item Each chart ${\bf z}_t$ gives a bijection $\beta\mapsto (\beta(z_{1;t}),\ldots, \beta(z_{r;t}))^T$ from $ \mathbb V(\mathbb R^{\rm max})$ to $\mathbb R^r$.
\item A tropical point $\beta\in {\rm Hom}_{\rm sf}(\mathbb F_{>0}(\mathbb V), \, \mathbb R^{\rm max})$ can be naturally identified with its collection of coordinate vectors $\{ {\bf q}^t\in\mathbb R^r\mid t\in\mathbb T\}$, where ${\bf q}^t=(\beta(z_{1;t}),\ldots, \beta(z_{r;t}))^T$. 
   
\end{itemize}

\begin{definition}Let  $\mathbb V = (\mathbb F(\mathbb V), \{{\bf z}_t\}_{t \in \mathbb T})$ be a positive space. A Laurent monomial ${\bf z}_t^{\bf h}$ in a chart ${\bf z}_t$ is called a {\em global (Laurent) monomial} on $\mathbb V$ if ${\bf z}_t^{\bf h}\in \mathcal U(\mathbb V)$, i.e., if it is a global function on $\mathbb V$. Denote by $$\mathcal U^{\rm mono}(\mathbb V)\coloneqq \bigcup_{t\in\mathbb T_n}\{{\bf z}_t^{\bf h}\mid {\bf h}\in\mathbb Z^r\text{ and }{\bf z}_t^{\bf h}\in \mathcal U(\mathbb V)\}$$ the set of global monomials on $\mathbb V$, which is a subset of $\mathcal U(\mathbb V)\cap \mathbb F_{>0}(\mathbb V)$.
\end{definition}
We note that the set $\mathcal U^{\rm mono}(\mathbb V)$ is non-empty, since the constant $1$ is always a global monomial on $\mathbb V$. On the other hand, coordinate variables need not be global monomials in general (see Example~\ref{ex:Y-A2}).

Each non-zero global function $u\in \mathcal U(\mathbb V)$ gives rise to a family 
$$\{\mathsf P_u^t\subseteq \mathbb R^r\mid t\in\mathbb T\}$$ of polytopes in $\mathbb R^r$,
where $\mathsf P_u^t$ is the Newton polytope of the Laurent polynomial $u\in \mathbb Z[z_{1;t}^{\pm 1},\ldots,z_{r;t}^{\pm 1}]$  expressed in the chart ${\bf z}_t$. We keep this notation. The main results in this paper are as follows:

\begin{theorem}[Theorem \ref{thm:polytope}]
\label{intro-main-thm-1}
      Let $\mathbb V = (\mathbb F(\mathbb V), \{{\bf z}_t\}_{t \in \mathbb T})$ be a positive space.  Then the following conditions are equivalent for any two global monomials  $u,v\in \mathcal U^{\rm mono}(\mathbb V)$.
      \begin{itemize}
          \item [(a)] There exists a chart ${\bf z}_{t_0}$ of $\mathbb V$ such that $\mathsf P_u^{t_0}\subseteq \mathsf P_v^{t_0}$;
          \item[(b)] For any tropical point $\beta\in \mathbb V(\mathbb R^{\rm max})=\Hom_{\rm sf}(\mathbb F_{>0}(\mathbb V),\mathbb R^{\rm max})$, we have $\beta(u)\leq \beta(v)$;
          \item[(c)] For any chart  ${\bf z}_{t}$ of $\mathbb V$, we have   $\mathsf P_u^{t}\subseteq \mathsf P_v^{t}$;
          \item[(d)] $u=v$.
      \end{itemize}
\end{theorem}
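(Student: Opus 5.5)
The plan is to translate every condition into a statement about support functions of Newton polytopes, via tropical evaluation. The key identity to establish first is this: for $u\in\mathcal U(\mathbb V)\cap\mathbb F_{>0}(\mathbb V)$ (in particular for a global monomial), any chart ${\bf z}_t$ and any tropical point $\beta$ with coordinate vector ${\bf q}^t$,
$$\beta(u)=h_{\mathsf P_u^t}({\bf q}^t)\coloneqq\max_{{\bf m}\in\mathsf P_u^t}\langle {\bf q}^t,{\bf m}\rangle .$$

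To prove the identity, write $u=F/H$ with $0\neq F,H\in\mathbb Z_{\ge0}[z_{1;t},\dots,z_{r;t}]$; this is possible because $u\in\mathbb Q_{\rm sf}({\bf z}_t)$ (after clearing rational coefficients). For a polynomial with nonnegative coefficients, the semifield homomorphism $\beta$ evaluates monomialwise, so $\beta(F)$ is the maximum of $\langle{\bf q}^t,{\bf m}\rangle$ over the exponents ${\bf m}$ of $F$. Hence $\beta(F)=h_{\mathsf N(F)}({\bf q}^t)$, and likewise for $H$. On the other hand, $u$ is a Laurent polynomial in ${\bf z}_t$ with integer coefficients, possibly of mixed sign, and $uH=F$ holds in the Laurent polynomial domain. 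Newton polytopes are multiplicative for products in a domain (look at the extreme monomial in each generic direction), so $\mathsf P_u^t+\mathsf N(H)=\mathsf N(F)$. Support functions are additive under Minkowski sums, so
$$\beta(u)=\beta(F)-\beta(H)=h_{\mathsf N(F)}({\bf q}^t)-h_{\mathsf N(H)}({\bf q}^t)=h_{\mathsf P_u^t}({\bf q}^t).$$
This Minkowski step, which handles possible cancellation in $u$, is the only point where care is needed, and it is the main obstacle.

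With the identity in hand, the equivalences among (a), (b) and (c) follow. A compact convex polytope is determined by its support function, and $P\subseteq Q$ holds iff $h_P\le h_Q$ on all of $\mathbb R^r$. Each chart gives a bijection $\mathbb V(\mathbb R^{\rm max})\to\mathbb R^r$, $\beta\mapsto{\bf q}^t$. Therefore (a) for $t_0$ is equivalent to $\beta(u)\le\beta(v)$ for all $\beta$, which is (b). In turn (b) gives $\mathsf P_u^t\subseteq\mathsf P_v^t$ for every chart $t$, which is (c). Trivially (c) implies (a), and (d) implies (a).

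It remains to prove (c)$\Rightarrow$(d). Write $v={\bf z}_{s}^{\bf h}$, a global monomial in some chart ${\bf z}_s$. Then $\mathsf P_v^{s}=\{{\bf h}\}$. Since $u\neq0$, the inclusion $\mathsf P_u^{s}\subseteq\{{\bf h}\}$ forces $u=c\,{\bf z}_s^{\bf h}=cv$ for some nonzero integer $c$. Now write $u={\bf z}_{s'}^{{\bf h}'}$ in its own chart ${\bf z}_{s'}$. Then $v=c^{-1}{\bf z}_{s'}^{{\bf h}'}$, and $v$ lies in $\mathbb Z[{\bf z}_{s'}^{\pm1}]$, so $c^{-1}\in\mathbb Z$. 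Hence $c=\pm1$. Finally, $u$ and $v$ are both subtraction-free, so they take positive values at positive real points, which gives $c=1$ and $u=v$.
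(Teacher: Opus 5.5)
Your proposal is correct and follows the paper's proof essentially step for step. Your key identity is Lemma~\ref{lem:b-h}, proved the same way via $u=F/H$, Proposition~\ref{pro:GKZ} and additivity of support functions; your (a)--(c) equivalences are Lemma~\ref{lem:p-h-sub}; and your (c)$\Rightarrow$(d) argument matches the paper's, except for a cosmetic reordering: the paper first gets $\lambda\geq 1$ from subtraction-freeness and then $\lambda^{-1}\in\mathbb Z$, whereas you get $c=\pm1$ first and fix the sign by evaluating at a positive point.
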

A Laurent monomial ${\bf z}^{\bf h}=z_1^{h_1}\ldots z_r^{h_r}$ in $\mathbb Z[{\bf z}^{\pm 1}]=\mathbb Z[z_{1}^{\pm 1},\ldots,z_r^{\pm 1}]$ is said to be {\em proper}, if there exists some $i\in [1,r]\coloneqq\{1,2,\ldots,r\}$ such that $h_i<0$.
As a consequence of Theorem \ref{intro-main-thm-1}, we have the following corollary.
\begin{corollary}[Corollary \ref{cor:proper}]
\label{intro-main-cor}
      Let $\mathbb V = (\mathbb F(\mathbb V), \{{\bf z}_t\}_{t \in \mathbb T})$ be a positive space. Fix a chart ${\bf z}_{t_0}$ of $\mathbb V$, and let $M(t_0)\subseteq \mathcal U^{\rm mono}(\mathbb V)$ denote the set of global monomials in this chart. If $u\in \mathcal U^{\rm mono}(\mathbb V)$ is a global monomial on $\mathbb V$ and  $u\notin M(t_0)$, then the following hold.

\begin{itemize}
    \item[(i)] No global monomial in $M(t_0)$ appears in the Laurent expansion of $u$ with respect to the chart ${\bf z}_{t_0}$.

    \item[(ii)] If $\{{\bf z}_{t_0}^{\bf h}\mid {\bf h}\in \mathbb N^r\}\subseteq M(t_0)$\footnote{This condition does not hold for a general positive space, but it does hold for positive spaces arising
from cluster algebras and Laurent phenomenon algebras, thanks to the Laurent phenomenon for cluster
variables.}, then the Laurent expansion of $u$ with respect to ${\bf z}_{t_0}$ is a linear combination of proper Laurent monomials in ${\bf z}_{t_0}$.
\end{itemize}
\end{corollary}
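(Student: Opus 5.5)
The plan is to deduce both parts from Theorem~\ref{intro-main-thm-1} together with a simple observation about Newton polytopes of Laurent polynomials with nonnegative coefficients. Since $u\in\mathcal U^{\rm mono}(\mathbb V)\subseteq \mathbb F_{>0}(\mathbb V)$, the element $u$ is subtraction-free in every chart. My first step is therefore to check that its Laurent expansion in ${\bf z}_{t_0}$ has nonnegative integer coefficients. The natural route is to write $u=F/H$ with $F,H\in\mathbb Z_{\geq0}[{\bf z}_{t_0}]$; then $uH=F$ with $u$ a Laurent polynomial. If $u$ had a negative coefficient we would want a contradiction, but positivity of a Laurent quotient of positive polynomials is not automatic in general, so I will avoid needing it. What I actually need is only this: if a monomial ${\bf z}_{t_0}^{\bf h}$ appears in $u$ with nonzero coefficient, then ${\bf h}\in\mathsf P_u^{t_0}$. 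That holds by the definition of the Newton polytope, regardless of signs.

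For (i), suppose a global monomial $v={\bf z}_{t_0}^{\bf h}\in M(t_0)$ appears in the expansion of $u$ in ${\bf z}_{t_0}$. Then $\mathsf P_v^{t_0}=\{{\bf h}\}\subseteq \mathsf P_u^{t_0}$. Both $u$ and $v$ are global monomials, so Theorem~\ref{intro-main-thm-1}, applied via (a)$\Rightarrow$(d) with the roles (``$u$'',``$v$'') $=(v,u)$, gives $v=u$. Hence $u\in M(t_0)$, contradicting the hypothesis. This settles (i) immediately.

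For (ii), suppose the hypothesis $\{{\bf z}_{t_0}^{\bf h}\mid {\bf h}\in\mathbb N^r\}\subseteq M(t_0)$ holds. If a non-proper monomial ${\bf z}_{t_0}^{\bf h}$ with ${\bf h}\in\mathbb N^r$ appeared in the expansion of $u$, it would be an element of $M(t_0)$ appearing in $u$, which contradicts (i). So every monomial with nonzero coefficient has some negative exponent, i.e.\ it is proper.

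I expect no real obstacle, since everything reduces to Theorem~\ref{intro-main-thm-1}. The only point needing care is to make sure ``appears'' means ``has nonzero coefficient'', so that the exponent genuinely lies in the Newton polytope. Sign issues never enter the argument.
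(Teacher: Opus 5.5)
Your proposal is correct and follows the paper's proof essentially verbatim. You observe $\mathsf P_v^{t_0}=\{{\bf h}\}\subseteq\mathsf P_u^{t_0}$ and apply Theorem~\ref{intro-main-thm-1} with the roles swapped to get $u=v\in M(t_0)$, then deduce (ii) directly from (i). The paper phrases (i) as the slightly stronger claim ${\bf h}_0\notin\mathsf P_u^{t_0}$, and your argument yields this as well.
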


We call the phenomenon in Corollary \ref{intro-main-cor} (i) the {\em global monomial avoidance phenomenon} for positive spaces.

\begin{theorem}[Theorem \ref{thm:linear-indp}]
\label{intro-main-thm-2}
  Let $\mathbb V = (\mathbb F(\mathbb V), \{{\bf z}_t\}_{t \in \mathbb T})$ be a positive space.  Then the global monomials on $\mathbb V$ are linearly independent over $\mathbb Z$. 
\end{theorem}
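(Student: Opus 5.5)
The plan is to argue by contradiction, using Theorem~\ref{intro-main-thm-1} (more precisely, Theorem~\ref{thm:polytope}) together with a vertex argument on Newton polytopes. Suppose there is a nontrivial relation $\sum_{i=1}^m c_i u_i=0$ with pairwise distinct global monomials $u_1,\ldots,u_m\in\mathcal U^{\rm mono}(\mathbb V)$ and all $c_i\in\mathbb Z\setminus\{0\}$, where $m\geq 2$ (the case $m=1$ is trivial since $u_1\neq 0$). Fix any chart ${\bf z}_{t_0}$ and expand each $u_i$ as a Laurent polynomial in ${\bf z}_{t_0}$. Since $u_i\in\mathbb F_{>0}(\mathbb V)=\mathbb Q_{\rm sf}({\bf z}_{t_0})$ is a Laurent polynomial with a subtraction-free expression, I first want all its coefficients to be nonnegative, or at least the vertex coefficients to be positive. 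I would obtain this from the structure of global monomials established earlier: $u_i={\bf z}_{t_i}^{{\bf h}_i}$ is a monomial in another chart, and it is a ratio of subtraction-free polynomials in ${\bf z}_{t_0}$ that happens to be Laurent. A ratio $F/H$ of positive polynomials that is Laurent has positive coefficients at the vertices of its Newton polytope, because $\mathsf P_F=\mathsf P_{u}+\mathsf P_H$ and the vertex coefficients of a product of polynomials with nonnegative coefficients are products of vertex coefficients. I expect the paper to have recorded this vertex-positivity as a lemma on the way to Theorem~\ref{thm:polytope}, and I will use it.

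The key step is then to choose a generic linear functional. Consider the finite set of polytopes $\mathsf P_{u_i}^{t_0}$, and let $\mathsf Q$ be the convex hull of their union. Pick a vertex ${\bf p}$ of $\mathsf Q$ together with a linear functional $\lambda$ that is uniquely maximized on $\mathsf Q$ at ${\bf p}$. Then ${\bf p}$ is a vertex of every $\mathsf P_{u_i}^{t_0}$ that contains it. In the relation $\sum c_iu_i=0$, the coefficient of ${\bf z}_{t_0}^{\bf p}$ is $\sum_{i:\,{\bf p}\in \mathsf P_{u_i}^{t_0}} c_i a_i$, where each $a_i>0$ is the vertex coefficient. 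This sum alone does not give a contradiction, because the signs of the $c_i$ may differ, so I need a sharper choice.

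Here Theorem~\ref{thm:polytope} enters, and this is the main obstacle. Among the distinct $u_i$, no polytope is contained in another in the chart $t_0$, since by (a)$\Rightarrow$(d) containment forces equality. I would strengthen the vertex argument as follows. Choose $\lambda$ generic, let $M=\max_i\max_{\mathsf P_{u_i}^{t_0}}\lambda$, and let $I$ be the set of indices attaining $M$; for generic $\lambda$, each face $\max_{\mathsf P_{u_i}}\lambda$ for $i\in I$ is a single vertex ${\bf p}_i$. If the ${\bf p}_i$ were all distinct, each coefficient $c_ia_i$ would have to vanish, giving a contradiction. So the work is to show that some generic $\lambda$ separates the top vertices. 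Equivalently, I must show it is impossible that for every generic $\lambda$ at least two maximizing polytopes share their $\lambda$-top vertex. The intended tool is the tropical interpretation: $\max_{\mathsf P_u^{t_0}}\lambda$ equals $\beta(u)$ for the tropical point $\beta$ with coordinate vector $\lambda$, and the tropicalization of a global monomial ${\bf z}_t^{\bf h}$ is linear on the chart $t$. The plan is to pass to the chart $t_1$ in which $u_1$ is a monomial. There $\mathsf P_{u_1}^{t_1}$ is a point, which is a vertex of itself, and by (c)$\Rightarrow$(d) it is not contained in any other $\mathsf P_{u_j}^{t_1}$. One then picks a $\lambda$ maximized at that point over all polytopes, after first choosing $u_1$ to be extremal. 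I would arrange this extremality by choosing $u_1$ maximal for the partial order given by (b), and then using a Hahn--Banach type separation of the point from the convex hull of the others in chart $t_1$. Once that vertex of $\mathsf Q$ belongs only to $\mathsf P_{u_1}^{t_1}$, its coefficient $c_1a_1$ must vanish, which is a contradiction.
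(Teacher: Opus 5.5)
There is a genuine gap at the step you yourself flag as the main obstacle, and the mechanism you propose for it does not work. To make ${\bf h}_1$ a vertex of $\mathsf Q$ in the chart ${\bf z}_{t_1}$, you need ${\bf h}_1\notin\mathrm{conv}\bigl(\bigcup_{j\geq 2}\mathsf P_{u_j}^{t_1}\bigr)$. Theorem~\ref{thm:polytope} only gives ${\bf h}_1\notin\mathsf P_{u_j}^{t_1}$ for each $j$ separately. Hahn--Banach separates ${\bf h}_1$ from each polytope individually, not from the hull of their union.

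Choosing $u_1$ maximal for the order in (b) does not help. By (b)$\Rightarrow$(d), distinct global monomials are never comparable, so every $u_i$ is maximal and the choice carries no information. Concretely, take the one-chart positive space with $\mathcal U(\mathbb V)=\mathbb Z[z_1^{\pm1},z_2^{\pm1}]$ and $u_1=z_1z_2$, $u_2=z_1^2$, $u_3=z_2^2$. Then ${\bf h}_1=(1,1)$ is the midpoint of the other two exponent vectors, so no linear functional on $\mathsf Q$ is maximized at ${\bf h}_1$.

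The missing observation is that no vertex, separation, or positivity argument is needed. The support of the Laurent expansion of $u_j$ in ${\bf z}_{t_1}$ lies in $\mathsf P_{u_j}^{t_1}$. You already have ${\bf h}_1\notin\mathsf P_{u_j}^{t_1}$ for $j\geq 2$; this is (a)$\Rightarrow$(d) applied to $u=u_1$, $v=u_j$ in the chart $t_1$. The implication (c)$\Rightarrow$(d) that you cite only gives non-containment in \emph{some} chart. So ${\bf z}_{t_1}^{{\bf h}_1}$ has coefficient $0$ in every $u_j$ with $j\geq 2$, and coefficient $1$ in $u_1={\bf z}_{t_1}^{{\bf h}_1}$.

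Hence the coefficient of ${\bf z}_{t_1}^{{\bf h}_1}$ in $\sum_i c_iu_i=0$, read in $\mathbb Z[z_{1;t_1}^{\pm1},\ldots,z_{r;t_1}^{\pm1}]$, is exactly $c_1$, so $c_1=0$. Repeating this with each $u_i$ in its own monomial chart gives $c_1=\cdots=c_m=0$; this is precisely the paper's proof. Your detour through a fixed chart ${\bf z}_{t_0}$, generic functionals, and vertex positivity of subtraction-free Laurent polynomials can simply be deleted. That positivity is true, but the paper never uses it and it plays no role here.
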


\begin{remark}
(i) The above results can be applied to the positive spaces arising from cluster algebras, $Y$-patterns, and Laurent phenomenon algebras whose clusters are related by subtraction-free birational transformations. In particular, our main results 
yield the proper Laurent monomial property of Laurent expansions and the linear independence of cluster monomials for all cluster algebras (see Corollary \ref{cor:A-side}) and for the Laurent phenomenon algebras  under consideration (see Corollary  \ref{cor:LP-side}).

(ii) Cluster algebras considered in this paper are  totally sign-skew-symmetric, which include skew-symmetrizable cluster algebras as a special case. Our results for cluster algebras hold in this general setting. The corresponding results for Laurent phenomenon algebras appear new to the best of our knowledge.

(iii) The idea and approach of this paper grew out of the study of the tropical invariant and the (partial) $F$-invariant in cluster algebras \cite{Cao-2023,cao-2026b}. These invariants are related to tropical evaluations and Newton polytopes of $F$-polynomials, which inspired the author to study Newton polytopes of Laurent expansions with respect to different charts/seeds.
\end{remark}

This paper is organized as follows. In Section \ref{sec:results}, after some preparations on polytopes and their support functions, we give the proofs of Theorem \ref{intro-main-thm-1}, Corollary \ref{intro-main-cor} and Theorem \ref{intro-main-thm-2}. In Section \ref{sec:application}, we give examples of positive spaces arising from cluster algebras, $Y$-patterns, and Laurent phenomenon algebras whose clusters are related by subtraction-free birational transformations. Our main results can be applied to these examples.

\section{Linear independence of global monomials via Newton polytopes}\label{sec:results}

\subsection{Polytopes and their support functions}
A {\em polytope} $\mathsf P$ in $\mathbb R^r$ is the convex hull of a finite (non-empty)
subset of $\mathbb R^r$, which is a bounded closed subset in $\mathbb R^r$. For a polytope  $\mathsf P$ in $\mathbb R^r$, its {\em support function} $h_{\mathsf P}:\mathbb R^r\to \mathbb R$ is defined by
\[ h_{\mathsf P}({\bf r}):=\max\{ \langle{{\bf a},{\bf r}}\rangle\mid {\bf a}\in\mathsf P\},\]
where $\langle-,-\rangle:\mathbb R^r\times \mathbb R^r\to \mathbb R$ is the standard inner product on $\mathbb R^r$.
It is known from \cite[Section 4.2]{BK-2012} or \cite[Section 1.7]{Schneider_2013} that a polytope $\mathsf P$ can be recovered from its support function $h_{\mathsf P}$ by 

\begin{eqnarray}\label{eqn:P-h}
     \mathsf P=\{{\bf a}\in\mathbb R^r\mid \langle{{\bf a},{\bf r}}\rangle\leq h_{\mathsf P}({\bf r}),\;\forall {\bf r}\in\mathbb R^r\}.
\end{eqnarray}

Let $\mathsf P_1$ and $\mathsf P_2$ be two polytopes in $\mathbb R^r$. The {\em Minkowski sum} of $\mathsf P_1$ and $\mathsf P_2$ is the polytope in $\mathbb R^r$ given by
\[\mathsf P_1+\mathsf P_2:=\{ {\bf a}+{\bf b}\mid {\bf a}\in \mathsf P_1,\;{\bf b}\in\mathsf P_2\}.\]

\begin{theorem}[{\cite[Theorem 1.7.5]{Schneider_2013}}] \label{thm:h-additive}
Let $\mathsf P_1$ and $\mathsf P_2$ be two polytopes in $\mathbb R^r$. Then \[h_{\mathsf P_1+\mathsf P_2}=h_{\mathsf P_1}+h_{\mathsf P_2}.\]
\end{theorem}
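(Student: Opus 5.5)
The statement to prove is Theorem~\ref{thm:h-additive}: for polytopes $\mathsf P_1,\mathsf P_2\subseteq\mathbb R^r$ we have $h_{\mathsf P_1+\mathsf P_2}=h_{\mathsf P_1}+h_{\mathsf P_2}$. The plan is to fix an arbitrary ${\bf r}\in\mathbb R^r$ and prove the two inequalities $h_{\mathsf P_1+\mathsf P_2}({\bf r})\le h_{\mathsf P_1}({\bf r})+h_{\mathsf P_2}({\bf r})$ and $\ge$ separately. The only tools needed are the bilinearity of $\langle-,-\rangle$ and the fact that each maximum in the definition of a support function is attained, since a polytope is compact and ${\bf a}\mapsto\langle{\bf a},{\bf r}\rangle$ is continuous. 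Before starting, I will note that $\mathsf P_1+\mathsf P_2$ is again a polytope: it is the convex hull of the finitely many sums of vertices of $\mathsf P_1$ and $\mathsf P_2$. This guarantees that $h_{\mathsf P_1+\mathsf P_2}$ is defined and that its maximum is also attained.

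For the inequality $\le$, I will choose ${\bf c}\in\mathsf P_1+\mathsf P_2$ with $\langle{\bf c},{\bf r}\rangle=h_{\mathsf P_1+\mathsf P_2}({\bf r})$. By definition of the Minkowski sum, ${\bf c}={\bf a}+{\bf b}$ with ${\bf a}\in\mathsf P_1$ and ${\bf b}\in\mathsf P_2$. Then
\[
\langle{\bf c},{\bf r}\rangle=\langle{\bf a},{\bf r}\rangle+\langle{\bf b},{\bf r}\rangle\le h_{\mathsf P_1}({\bf r})+h_{\mathsf P_2}({\bf r}).
\]

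For the inequality $\ge$, I will choose maximizers ${\bf a}_0\in\mathsf P_1$ and ${\bf b}_0\in\mathsf P_2$ of $\langle-,{\bf r}\rangle$ on each polytope. Their sum ${\bf a}_0+{\bf b}_0$ lies in $\mathsf P_1+\mathsf P_2$, so
\[
h_{\mathsf P_1+\mathsf P_2}({\bf r})\ge\langle{\bf a}_0+{\bf b}_0,{\bf r}\rangle=h_{\mathsf P_1}({\bf r})+h_{\mathsf P_2}({\bf r}).
\]

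Combining the two inequalities gives equality at every ${\bf r}$, which proves the theorem. There is no real obstacle. The only point needing care is that the maxima exist (rather than being suprema), and compactness of polytopes handles this. One could even avoid it by working with suprema throughout, since $\sup$ over a sumset of a sum of independent terms equals the sum of the suprema.
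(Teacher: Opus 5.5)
Your proof is correct: for fixed ${\bf r}$, splitting a maximizer ${\bf c}={\bf a}+{\bf b}$ gives $h_{\mathsf P_1+\mathsf P_2}({\bf r})\le h_{\mathsf P_1}({\bf r})+h_{\mathsf P_2}({\bf r})$, adding maximizers ${\bf a}_0+{\bf b}_0$ gives the reverse bound, and compactness guarantees the maxima are attained. The paper does not prove this statement itself but quotes it from Schneider's book, where it is exactly this direct consequence of the definitions, so your argument is the standard one.
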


Given a non-zero Laurent polynomial $L({\bf z})=\sum_{{\bf h}\in\mathbb Z^r}c_{\bf h}{\bf z}^{\bf h}\in\mathbb Z[z_1^{\pm 1},\ldots,z_r^{\pm 1}]$, its {\em Newton polytope} $\mathsf P(L)$ is defined as the convex hull of the finite set $\{{\bf h}\in\mathbb Z^r\mid c_{\bf h}\neq 0\}$ in $\mathbb R^r$.

\begin{proposition}[{\cite[Chapter 6, Prop. 1.2]{GKZ-1994}}]
\label{pro:GKZ}
 Let $L_1$ and $L_2$ be two non-zero Laurent polynomials in $\mathbb Z[z_1^{\pm1},\ldots,z_r^{\pm 1}]$. Then 
 $$\mathsf P(L_1L_2)=\mathsf P(L_1)+\mathsf P(L_2),$$
 where $\mathsf P(L_k)$ is the Newton polytope of $L_k$ for $k=1,2$.
\end{proposition}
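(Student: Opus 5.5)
The plan is to prove the two inclusions $\mathsf P(L_1L_2)\subseteq \mathsf P(L_1)+\mathsf P(L_2)$ and $\mathsf P(L_1)+\mathsf P(L_2)\subseteq \mathsf P(L_1L_2)$ separately. The first one is purely combinatorial, and the second one uses a genericity argument on vertices. Write $L_1=\sum_{\bf a} c_{\bf a}{\bf z}^{\bf a}$ and $L_2=\sum_{\bf b} d_{\bf b}{\bf z}^{\bf b}$, and let $S_1, S_2\subseteq \mathbb Z^r$ be their (finite, non-empty) supports. Then
$$L_1L_2=\sum_{\bf v}\Bigl(\sum_{{\bf a}+{\bf b}={\bf v}} c_{\bf a}d_{\bf b}\Bigr){\bf z}^{\bf v}.$$

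For the first inclusion, every exponent ${\bf v}$ with non-zero coefficient in $L_1L_2$ has the form ${\bf a}+{\bf b}$ with ${\bf a}\in S_1$ and ${\bf b}\in S_2$. Hence the support of $L_1L_2$ is contained in $S_1+S_2\subseteq \mathsf P(L_1)+\mathsf P(L_2)$. The Minkowski sum of two convex sets is convex, so taking convex hulls gives $\mathsf P(L_1L_2)\subseteq \mathsf P(L_1)+\mathsf P(L_2)$.

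For the reverse inclusion, I will first note that $\mathsf P(L_1)+\mathsf P(L_2)$ is a polytope. It is the convex hull of the finite set $S_1+S_2$, because a sum of two convex combinations can be rewritten as a convex combination of pairwise sums. Consequently it equals the convex hull of its vertices, and it suffices to show that every vertex ${\bf v}$ of $\mathsf P(L_1)+\mathsf P(L_2)$ lies in the support of $L_1L_2$.

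This is the key step. Since ${\bf v}$ is a vertex of the polytope $\operatorname{conv}(S_1+S_2)$, I will choose ${\bf r}\in\mathbb R^r$ such that $\langle -,{\bf r}\rangle$ attains its maximum over $\mathsf P(L_1)+\mathsf P(L_2)$ uniquely at ${\bf v}$. By Theorem~\ref{thm:h-additive}, this maximum is $h_{\mathsf P(L_1)}({\bf r})+h_{\mathsf P(L_2)}({\bf r})$.

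I then claim that $\langle-,{\bf r}\rangle$ has a unique maximizer ${\bf a}_0$ on $S_1$ and a unique maximizer ${\bf b}_0$ on $S_2$, and that ${\bf v}={\bf a}_0+{\bf b}_0$. Indeed, if two distinct points of $S_1$ both attained the maximum, adding any maximizer from $S_2$ would give two distinct maximizers on $S_1+S_2$, contradicting uniqueness at ${\bf v}$.

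Now consider any pair $({\bf a},{\bf b})\in S_1\times S_2$ with ${\bf a}+{\bf b}={\bf v}$. It satisfies $\langle{\bf a},{\bf r}\rangle+\langle {\bf b},{\bf r}\rangle=h_{\mathsf P(L_1)}({\bf r})+h_{\mathsf P(L_2)}({\bf r})$, which forces both terms to be maximal, so ${\bf a}={\bf a}_0$ and ${\bf b}={\bf b}_0$. Therefore the coefficient of ${\bf z}^{\bf v}$ in $L_1L_2$ is the single product $c_{{\bf a}_0}d_{{\bf b}_0}$. This is non-zero because $\mathbb Z$ is an integral domain. Hence ${\bf v}$ lies in the support of $L_1L_2$, and so $\mathsf P(L_1)+\mathsf P(L_2)\subseteq\mathsf P(L_1L_2)$.

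I expect the main obstacle to be justifying the choice of ${\bf r}$ exposing a single vertex, since the definitions above are stated via support functions rather than faces. I would handle this directly for the finite set $S_1+S_2$. A vertex ${\bf v}$ is not in the convex hull of $(S_1+S_2)\setminus\{{\bf v}\}$, so by the separating hyperplane theorem (or equivalently via \eqref{eqn:P-h} applied to that smaller hull) there exists ${\bf r}$ with $\langle{\bf v},{\bf r}\rangle>\langle{\bf w},{\bf r}\rangle$ for all other ${\bf w}\in S_1+S_2$. This strict inequality gives exactly the uniqueness used above.
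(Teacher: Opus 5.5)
Your proof is correct and complete. For the inclusion $\mathsf P(L_1L_2)\subseteq\mathsf P(L_1)+\mathsf P(L_2)$ you use the support of the product. For the reverse inclusion you expose each vertex ${\bf v}$ of $\mathrm{conv}(S_1+S_2)$ by a vector ${\bf r}$, so that ${\bf v}={\bf a}_0+{\bf b}_0$ is its only decomposition and its coefficient in $L_1L_2$ is the single non-zero product $c_{{\bf a}_0}d_{{\bf b}_0}$. The paper gives no proof of its own and simply cites Gelfand--Kapranov--Zelevinsky; your argument is the standard proof of that fact, and obtaining the exposing vector from \eqref{eqn:P-h} keeps it within the paper's own toolkit.
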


\subsection{Proofs of the main results}
In this subsection, we give the proofs of the main results after some preparations.

\begin{lemma}\label{lem:b-h}
     Let $L\in\mathbb Z[z_1^{\pm 1},\ldots,z_r^{\pm 1}] \cap \mathbb Q_{\rm sf}({\bf z})$  and let $h_{\mathsf P(L)}$ be the support function of 
      the Newton polytope $\mathsf P(L)$ of $L$.  Then for any semifield homomorphism $\beta\in \Hom_{\rm sf}(\mathbb Q_{\rm sf}({\bf z}),\mathbb R^{\rm max})$, we have 
\[\beta(L)=h_{\mathsf P(L)}({\bf q}),\]
where ${\bf q}:=(\beta(z_1),\ldots, \beta(z_r))^T\in\mathbb R^r$.
\end{lemma}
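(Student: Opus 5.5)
The plan is to reduce to the case of a polynomial with nonnegative coefficients, where the claim is immediate, and then divide. Such a reduction is needed because $L$ has integer coefficients, possibly negative, so $\beta$ cannot be applied term by term to $L$ directly. The only input about $L$ being subtraction-free is that we can write $L=F/H$ with $0\neq F,H\in\mathbb Z_{\geq 0}[z_1,\ldots,z_r]$.

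\emph{Step 1: the case of nonnegative coefficients.} First I check that $\beta(c)=0$ for every positive rational $c$. Since $\beta$ is a group homomorphism, $\beta(1)=0$. For a positive integer $n$ we get $\beta(n)=\beta(1\oplus\cdots\oplus 1)=\max\{0,\ldots,0\}=0$, and then $\beta(m/n)=\beta(m)-\beta(n)=0$. Multiplicativity gives $\beta(c\,{\bf z}^{\bf h})=\langle {\bf h},{\bf q}\rangle$ for any $c>0$. Additivity ($\oplus\mapsto\max$) then gives, for a nonzero $G=\sum c_{\bf h}{\bf z}^{\bf h}$ with all $c_{\bf h}\geq 0$,
\[
\beta(G)=\max\{\langle{\bf h},{\bf q}\rangle\mid c_{\bf h}\neq 0\}.
\]
The right-hand side equals $h_{\mathsf P(G)}({\bf q})$, because a linear functional attains its maximum over the convex hull of a finite set at a point of that set. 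Applying this to $F$ and $H$ yields $\beta(F)=h_{\mathsf P(F)}({\bf q})$ and $\beta(H)=h_{\mathsf P(H)}({\bf q})$.

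\emph{Step 2: transfer to $L$.} The identity $LH=F$ holds in $\mathbb Z[z_1^{\pm1},\ldots,z_r^{\pm1}]$, and all three factors are nonzero. Proposition~\ref{pro:GKZ} therefore gives $\mathsf P(F)=\mathsf P(L)+\mathsf P(H)$. Theorem~\ref{thm:h-additive} then gives $h_{\mathsf P(F)}=h_{\mathsf P(L)}+h_{\mathsf P(H)}$. On the tropical side, $\beta$ is a semifield homomorphism, so $\beta(F)=\beta(L)+\beta(H)$. Comparing these two equalities using Step~1 and cancelling the real number $\beta(H)=h_{\mathsf P(H)}({\bf q})$ yields $\beta(L)=h_{\mathsf P(L)}({\bf q})$.

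The main conceptual obstacle is exactly the possible negative coefficients of $L$, since the naive termwise evaluation is unavailable. This is resolved by the multiplicativity of Newton polytopes (Proposition~\ref{pro:GKZ}) together with the additivity of support functions (Theorem~\ref{thm:h-additive}). Everything else is routine bookkeeping.
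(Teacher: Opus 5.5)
Your proposal is correct and follows essentially the same route as the paper: the nonnegative-coefficient case by termwise tropical evaluation, then writing $L=F/H$ and using $\mathsf P(F)=\mathsf P(L)+\mathsf P(H)$ together with additivity of support functions and $\beta(F)=\beta(L)+\beta(H)$. You also make explicit the check that $\beta(c)=0$ for positive rationals $c$, which the paper uses without comment.
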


\begin{proof}
We first consider the case that $L=\sum_{{\bf h}\in\mathbb Z^r}c_{\bf h}{\bf z}^{\bf h}\in \mathbb Z_{\geq 0}[z_1^{\pm 1},\ldots,z_r^{\pm 1}]$ is a Laurent polynomial with non-negative coefficients. Since $\beta\in \Hom_{\rm sf}(\mathbb Q_{\rm sf}({\bf z}),\mathbb R^{\rm max})$ and ${\bf q}=\beta({\bf z})^T=(\beta(z_1),\ldots,\beta(z_r))^T$, we have
\begin{eqnarray*}
    \beta(L)&=&\max\{\beta({\bf z}^{\bf h})\mid c_{\bf h}\neq 0\}=
\max\{\langle {\bf q}, {\bf h}\rangle\mid c_{\bf h}\neq 0\}\\
&=&\max\{\langle {\bf q}, {\bf h}\rangle\mid {\bf h}\in \mathsf P(L)\}=h_{\mathsf P(L)}({\bf q}).
\end{eqnarray*}

Now we turn to the general case $L\in \mathbb Z[z_1^{\pm 1},\ldots,z_r^{\pm 1}] \cap \mathbb Q_{\rm sf}({\bf z})$. In this case, there exist $0\neq F,H\in\mathbb Z_{\geq 0}[z_1,\ldots,z_r]$ such that $L=F/H\in \mathbb Q_{\rm sf}({\bf z})$.
By applying the semifield homomorphism $\beta$, we have $$\beta(L)=\beta(F)-\beta(H).$$ Since $0\neq F,H\in\mathbb Z_{\geq 0}[z_1,\ldots,z_r]$ and by what we have proved, we have  
\[\beta(F)=h_{\mathsf P(F)}({\bf q})\quad \text{and}\quad \beta(H)=h_{\mathsf P(H)}({\bf q}).
\]

By viewing $F=L\cdot H$ as an equality in $\mathbb Z[z_1^{\pm 1},\ldots, z_r^{\pm 1}]$ and by Proposition \ref{pro:GKZ}, we have $\mathsf P(F)=\mathsf P(L)+\mathsf P(H)$. Then by Theorem \ref{thm:h-additive}, we have
 \[ h_{\mathsf P(F)}({\bf q})=h_{\mathsf P(L)}({\bf q})+h_{\mathsf P(H)}({\bf q}).
 \]
 Thus we have that $h_{\mathsf P(L)}({\bf q})=h_{\mathsf P(F)}({\bf q})-h_{\mathsf P(H)}({\bf q})=\beta(F)-\beta(H)=\beta(L)$.
\end{proof}

\begin{lemma}\label{lem:p-h-sub}
Let $L_1, L_2\in \mathbb Z[z_1^{\pm1},\ldots,z_r^{\pm 1}]\cap \mathbb Q_{\rm sf}({\bf z})$ and let $\mathsf P(L_1), \mathsf P(L_2)$ be their Newton polytopes. Then the following conditions are equivalent.
\begin{itemize}
    \item[(a)] $\mathsf P(L_1)\subseteq \mathsf P(L_2)$;
    \item[(b)] $h_{\mathsf P(L_1)}({\bf q})\leq h_{\mathsf P(L_2)}({\bf q})$ for any ${\bf q}\in\mathbb R^r$;
    \item[(c)] $\beta(L_1)\leq\beta(L_2)$ for any $\beta\in \Hom_{\rm sf}(\mathbb Q_{\rm sf}({\bf z}),\mathbb R^{\rm max})$.
\end{itemize}
\end{lemma}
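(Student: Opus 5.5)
The plan is to prove (a)$\Leftrightarrow$(b) using only convex geometry, and (b)$\Leftrightarrow$(c) using Lemma~\ref{lem:b-h} together with the bijection between tropical points and $\mathbb R^r$.

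For (a)$\Rightarrow$(b), the inclusion $\mathsf P(L_1)\subseteq \mathsf P(L_2)$ means the maximum defining $h_{\mathsf P(L_1)}({\bf q})$ is taken over a smaller set, so $h_{\mathsf P(L_1)}({\bf q})\leq h_{\mathsf P(L_2)}({\bf q})$. For (b)$\Rightarrow$(a), I will use the recovery formula \eqref{eqn:P-h}. Take ${\bf a}\in\mathsf P(L_1)$. Then for every ${\bf r}$ we have $\langle{\bf a},{\bf r}\rangle\leq h_{\mathsf P(L_1)}({\bf r})\leq h_{\mathsf P(L_2)}({\bf r})$, and \eqref{eqn:P-h} applied to $\mathsf P(L_2)$ gives ${\bf a}\in\mathsf P(L_2)$.

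For (b)$\Leftrightarrow$(c), Lemma~\ref{lem:b-h} gives $\beta(L_k)=h_{\mathsf P(L_k)}({\bf q})$ for $k=1,2$, where ${\bf q}=(\beta(z_1),\ldots,\beta(z_r))^T$. It remains to check that ${\bf q}$ ranges over all of $\mathbb R^r$ as $\beta$ ranges over $\Hom_{\rm sf}(\mathbb Q_{\rm sf}({\bf z}),\mathbb R^{\rm max})$. This surjectivity is the only step needing care. Given ${\bf q}\in\mathbb R^r$, I would define $\beta(F/H)=h_{\mathsf P(F)}({\bf q})-h_{\mathsf P(H)}({\bf q})$ for nonzero $F,H\in\mathbb Z_{\geq0}[{\bf z}]$, i.e.\ tropicalize at ${\bf q}$. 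I then need to check three things.
\begin{itemize}
\item[(1)] $\beta$ is well defined. If $F/H=F'/H'$, then $FH'=F'H$, and Proposition~\ref{pro:GKZ} with Theorem~\ref{thm:h-additive} give the required equality.
\item[(2)] $\beta$ is multiplicative, again by Proposition~\ref{pro:GKZ} and Theorem~\ref{thm:h-additive}.
\item[(3)] $\beta$ sends $\oplus$ to $\max$. Since the coefficients are nonnegative there is no cancellation, so $\mathsf P(F+F')=\mathrm{conv}(\mathsf P(F)\cup\mathsf P(F'))$ and hence $h_{\mathsf P(F+F')}=\max(h_{\mathsf P(F)},h_{\mathsf P(F')})$. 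After putting two fractions over a common denominator, this yields additivity.
\end{itemize}
Alternatively, I would simply cite the fact recorded earlier that each chart gives a bijection $\mathbb V(\mathbb R^{\rm max})\to\mathbb R^r$, applied to the one-chart positive space. Once surjectivity is established, (b) and (c) are literally the same family of inequalities, and the equivalence follows.
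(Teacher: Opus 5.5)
Your proof is correct and follows the paper's route: (a)$\Leftrightarrow$(b) comes from the definition of support functions together with the recovery formula \eqref{eqn:P-h}, and (b)$\Leftrightarrow$(c) comes from Lemma~\ref{lem:b-h}. Your explicit check that $\beta\mapsto(\beta(z_1),\ldots,\beta(z_r))^T$ is surjective onto $\mathbb R^r$, by tropicalizing at ${\bf q}$, fills in a point the paper leaves implicit: the paper relies on the bijection between tropical points and $\mathbb R^r$ stated in the introduction.
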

\begin{proof} The equivalence of (a) and (b) follows from the definition of support functions and \eqref{eqn:P-h}. 
The equivalence of (b) and (c) follows from Lemma \ref{lem:b-h}.
\end{proof}

 Let $\mathbb V = (\mathbb F(\mathbb V), \{{\bf z}_t\}_{t \in \mathbb T})$ be a positive space. Recall that  
each non-zero global function $u\in \mathcal U(\mathbb V)$ gives rise to a family 
$$\{\mathsf P_u^t\subseteq \mathbb R^r\mid t\in\mathbb T\}$$ of polytopes in $\mathbb R^r$,
where $\mathsf P_u^t$ is the Newton polytope of the Laurent polynomial $u\in \mathbb Z[z_{1;t}^{\pm 1},\ldots,z_{r;t}^{\pm 1}]$ expressed in the chart ${\bf z}_t$.

\begin{theorem}\label{thm:polytope}
      Let $\mathbb V = (\mathbb F(\mathbb V), \{{\bf z}_t\}_{t \in \mathbb T})$ be a positive space. Then the following conditions are equivalent for any two global monomials  $u,v\in \mathcal U^{\rm mono}(\mathbb V)$.
      \begin{itemize}
          \item [(a)] There exists a chart ${\bf z}_{t_0}$ of $\mathbb V$ such that $\mathsf P_u^{t_0}\subseteq \mathsf P_v^{t_0}$;
          \item[(b)] For any tropical point $\beta\in \mathbb V(\mathbb R^{\rm max})=\Hom_{\rm sf}(\mathbb F_{>0}(\mathbb V),\mathbb R^{\rm max})$, we have $\beta(u)\leq \beta(v)$;
          \item[(c)] For any chart  ${\bf z}_{t}$ of $\mathbb V$, we have   $\mathsf P_u^{t}\subseteq \mathsf P_v^{t}$;
          \item[(d)] $u=v$.
      \end{itemize}
\end{theorem}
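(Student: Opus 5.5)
The implications (d)$\Rightarrow$(c)$\Rightarrow$(a) are trivial. The plan is to prove (a)$\Leftrightarrow$(b), then (b)$\Rightarrow$(c), and finally the substantive implication (b)$\Rightarrow$(d).

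For (a)$\Leftrightarrow$(b), note that $u,v\in\mathcal U(\mathbb V)\cap\mathbb F_{>0}(\mathbb V)$. Hence in any chart ${\bf z}_t$ both are Laurent polynomials lying in $\mathbb Q_{\rm sf}({\bf z}_t)=\mathbb F_{>0}(\mathbb V)$. Tropical points of $\mathbb V$ are exactly the elements of $\Hom_{\rm sf}(\mathbb Q_{\rm sf}({\bf z}_t),\mathbb R^{\rm max})$. Lemma \ref{lem:p-h-sub}, applied in the chart ${\bf z}_{t_0}$, therefore gives $\mathsf P_u^{t_0}\subseteq\mathsf P_v^{t_0}$ if and only if $\beta(u)\le\beta(v)$ for all $\beta$. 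Since condition (b) does not depend on the chart, the same lemma in an arbitrary chart ${\bf z}_t$ yields (b)$\Rightarrow$(c).

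The key step is (b)$\Rightarrow$(d). Write $u={\bf z}_{t_1}^{\bf h}$ and $v={\bf z}_{t_2}^{\bf g}$ as monomials in their own charts.
\begin{itemize}
\item In the chart ${\bf z}_{t_2}$, the polytope $\mathsf P_v^{t_2}=\{{\bf g}\}$ is a single point. By (c), already established, $\mathsf P_u^{t_2}\subseteq\{{\bf g}\}$, so $\mathsf P_u^{t_2}=\{{\bf g}\}$. Thus $u=c\,{\bf z}_{t_2}^{\bf g}=c\,v$ for some nonzero integer $c$.
\item Symmetrically, applying (c) in the chart ${\bf z}_{t_1}$ is not directly available, since we only have the inclusion one way. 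Instead, use $u=cv$ together with the fact that $u$ is a monomial in ${\bf z}_{t_1}$: then $v=c^{-1}{\bf z}_{t_1}^{\bf h}$.
\item Since $v$ is a global function, its expansion in ${\bf z}_{t_1}$ has integer coefficients, so $c^{-1}\in\mathbb Z$. Together with $c\in\mathbb Z$, this gives $c=\pm1$.
\item Finally, $u$ and $v$ both lie in the semifield $\mathbb Q_{\rm sf}$, and subtraction-free rational functions take positive values at positive real points. Hence $c>0$, so $c=1$ and $u=v$.
\end{itemize}

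The main obstacle I expect is this last part: pinning down the scalar. It requires combining integrality of the coefficients in both charts with positivity to exclude $c=-1$. The polytope arguments themselves reduce cleanly to Lemma \ref{lem:p-h-sub}.
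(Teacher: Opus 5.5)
Your proposal is correct and follows essentially the same route as the paper: (a)$\Leftrightarrow$(b) and (b)$\Rightarrow$(c) both reduce to Lemma~\ref{lem:p-h-sub}, and (d) comes from applying (c) in the chart where $v$ is a monomial to get $u=\lambda v$ with $0\neq\lambda\in\mathbb Z$, then forcing $\lambda=1$ from integrality of the expansion of $v$ in the chart of $u$ together with subtraction-freeness. The only difference is cosmetic: the paper applies positivity first to get $\lambda\geq 1$ and then uses $\lambda^{-1}\in\mathbb Z$, while you apply these two facts in the opposite order.
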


\begin{proof}

(a)$\Rightarrow$(b): For a chart ${\bf z}_t$, we denote by $u=L_u^t({\bf z}_t)\in\mathbb Z[z_{1;t}^{\pm 1},\ldots,z_{r;t}^{\pm 1}]$ the Laurent expansion of $u$ with respect to the chart ${\bf z}_t$.
 Suppose $\mathsf P_u^{t_0}\subseteq \mathsf P_v^{t_0}$ for some chart ${\bf z}_{t_0}$.  Then by Lemma \ref{lem:p-h-sub}, for any $\beta\in \mathbb V(\mathbb R^{\rm max})=\Hom_{\rm sf}(\mathbb Q_{\rm sf} ({\bf z}_{t_0}),\mathbb R^{\rm max})$,
we have
\[\beta(u)=\beta(L_u^{t_0})\leq \beta(L_v^{t_0})=\beta(v).
\]
This proves  the implication (a)$\Rightarrow$(b). A similar argument, using the equivalence in Lemma~\ref{lem:p-h-sub}, gives  the implication (b)$\Rightarrow$(c). The implication (d)$\Rightarrow$(a) is clear.

It remains to prove (c)$\Rightarrow$(d). Since $v$ is a global monomial on $\mathbb V$, it is of the form $v={\bf z}_t^{\bf h}$ for some chart ${\bf z}_t$ and ${\bf h}\in \mathbb Z^r$. By applying the condition (c) to this chart, we have $\emptyset\neq \mathsf P_u^t\subseteq \mathsf P_v^t=\{{\bf h}\}$. We obtain that $\mathsf P_u^t=\{{\bf h}\}$, and hence $$u=\lambda {\bf z}_t^{\bf h}=\lambda v$$ for some $0\neq \lambda\in\mathbb Z$. Since $u\in\mathbb Q_{\rm sf}({\bf z}_t)$, we have $\lambda\in\mathbb Z_{\geq 1}$. Since $u$ is also a global monomial on $\mathbb V$, there exist a chart ${\bf z}_{t'}$ and ${\bf h}'\in\mathbb Z^r$ such that $u={\bf z}_{t'}^{{\bf h}'}$. 
Thus we have $\lambda v=u={\bf z}_{t'}^{{\bf h}'}$. Then $v=\lambda^{-1}{\bf z}_{t'}^{{\bf h}'}$ can be viewed as  the Laurent expansion of the global monomial $v$ with respect to the chart ${\bf z}_{t'}$, which implies that $\lambda^{-1}\in\mathbb Z$. Since $\lambda\in\mathbb Z_{\geq 1}$,
we must have $\lambda=1$. Thus $u={\bf z}_t^{\bf h}=v$. This proves the implication (c)$\Rightarrow$(d).
\end{proof}

\begin{remark}
We note that the equivalence of the conditions (a), (b), and (c) in fact holds for any two elements $u,v$ in $\mathcal U(\mathbb V)\cap \mathbb F_{>0}(\mathbb V)$. 
\end{remark}

Recall that a Laurent monomial ${\bf z}^{\bf h}=z_1^{h_1}\ldots z_r^{h_r}$ in $\mathbb Z[{\bf z}^{\pm 1}]=\mathbb Z[z_{1}^{\pm 1},\ldots,z_r^{\pm 1}]$ is said to be {\em proper}, if there exists some $i$ such that $h_i<0$.

\begin{corollary}\label{cor:proper}
      Let $\mathbb V = (\mathbb F(\mathbb V), \{{\bf z}_t\}_{t \in \mathbb T})$ be a positive space. Fix a chart ${\bf z}_{t_0}$ of $\mathbb V$, and let $M(t_0)\subseteq \mathcal U^{\rm mono}(\mathbb V)$ denote the set of global monomials in this chart. If $u\in \mathcal U^{\rm mono}(\mathbb V)$ is a global monomial on $\mathbb V$ and  $u\notin M(t_0)$, then the following hold.

\begin{itemize}
    \item[(i)] No global monomial in $M(t_0)$ appears in the Laurent expansion of $u$ with respect to the chart ${\bf z}_{t_0}$.

    \item[(ii)] If $\{{\bf z}_{t_0}^{\bf h}\mid {\bf h}\in \mathbb N^r\}\subseteq M(t_0)$, then the Laurent expansion of $u$ with respect to ${\bf z}_{t_0}$ is a linear combination of proper Laurent monomials in ${\bf z}_{t_0}$.
\end{itemize}
\end{corollary}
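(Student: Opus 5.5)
For (i), I will argue by contradiction. Suppose some $w\in M(t_0)$, say $w={\bf z}_{t_0}^{\bf h}$, appears with non-zero coefficient in the Laurent expansion $L_u^{t_0}$ of $u$ with respect to ${\bf z}_{t_0}$. Then ${\bf h}$ lies in the support of $L_u^{t_0}$, and hence
\[
\mathsf P_w^{t_0}=\{{\bf h}\}\subseteq \mathsf P_u^{t_0}.
\]
Both $w$ and $u$ are global monomials on $\mathbb V$. So I will apply Theorem~\ref{thm:polytope} with the pair $(w,u)$ and the chart ${\bf z}_{t_0}$. Condition (a) holds, so condition (d) gives $u=w\in M(t_0)$. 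This contradicts the assumption $u\notin M(t_0)$.

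For (ii), I will use the hypothesis that every ${\bf z}_{t_0}^{\bf h}$ with ${\bf h}\in\mathbb N^r$ belongs to $M(t_0)$. By (i), no such monomial occurs in $L_u^{t_0}$. Therefore every exponent ${\bf h}$ appearing in the expansion lies outside $\mathbb N^r$, so it has some coordinate $h_i<0$. This is exactly the statement that the expansion is a linear combination of proper Laurent monomials.

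The only point needing care is that "appears in the expansion" means the exponent ${\bf h}$ has non-zero coefficient. Nothing cancels here, because we are just reading off a single Laurent polynomial.
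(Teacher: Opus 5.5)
Your proof is correct and follows the paper's argument: apply Theorem~\ref{thm:polytope} to the pair $({\bf z}_{t_0}^{\bf h},u)$ via the inclusion $\{{\bf h}\}\subseteq\mathsf P_u^{t_0}$, then read off (ii) from (i). The paper proves the slightly stronger statement that ${\bf h}\notin\mathsf P_u^{t_0}$ (the exponent is not even in the Newton polytope), and your argument already gives this, since the contradiction uses only ${\bf h}\in\mathsf P_u^{t_0}$.
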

\begin{proof}
(i) For any global monomial $v={\bf z}_{t_0}^{{\bf h}_0}\in M(t_0)$, it suffices to show that ${\bf h}_0\notin \mathsf P_u^{t_0}$. Assume by contradiction that ${\bf h}_0\in \mathsf P_u^{t_0}$. Then we have $$\mathsf P_v^{t_0}=\{{\bf h}_0\}\subseteq \mathsf P_u^{t_0}$$ for the global monomials $u$ and $v$. Then by Theorem \ref{thm:polytope}, we obtain $u=v={\bf z}_{t_0}^{{\bf h}_0}\in M(t_0)$. This contradicts $u\notin M(t_0)$. Hence, ${\bf h}_0\notin \mathsf P_u^{t_0}$. In particular, ${\bf z}_{t_0}^{{\bf h}_0}$ does not appear in Laurent expansion of $u$ with respect to ${\bf z}_{t_0}$. Since ${\bf z}_{t_0}^{{\bf h}_0}$ is an arbitrary global monomial in chart ${\bf z}_{t_0}$, it follows that the Laurent expansion of $u$ with respect to ${\bf z}_{t_0}$ contains no global monomial from ${\bf z}_{t_0}$.

(ii) This follows from (i), since  $\{{\bf z}_{t_0}^{\bf h}\mid {\bf h}\in \mathbb N^r\}\subseteq M(t_0)$. 
\end{proof}

For future reference, we call the phenomenon in Corollary \ref{cor:proper} (i) the {\em global monomial avoidance phenomenon} for positive spaces.

\begin{theorem}\label{thm:linear-indp}
  Let $\mathbb V = (\mathbb F(\mathbb V), \{{\bf z}_t\}_{t \in \mathbb T})$ be a positive space.  Then the global monomials on $\mathbb V$ are linearly independent over $\mathbb Z$. 
\end{theorem}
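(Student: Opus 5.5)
Suppose, for contradiction, that there is a nontrivial relation $\sum_{k=1}^m c_k u_k=0$, where $u_1,\ldots,u_m\in\mathcal U^{\rm mono}(\mathbb V)$ are pairwise distinct and $c_k\in\mathbb Z\setminus\{0\}$. We may take $m\geq 2$, since a single global monomial is a nonzero element of $\mathbb F$. The strategy is to find one of the $u_k$ together with a chart in which that $u_k$ is a single monomial ${\bf z}_{t}^{\bf h}$ whose exponent ${\bf h}$ does not occur in the Laurent expansion of any other $u_j$. Expanding the relation in that chart, the coefficient of ${\bf z}_t^{\bf h}$ would then be $c_k\neq 0$, which is a contradiction.

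To choose $u_k$, I would use the partial order supplied by Theorem \ref{thm:polytope}. For global monomials, set $u\preceq v$ if $\beta(u)\leq\beta(v)$ for all $\beta\in\mathbb V(\mathbb R^{\rm max})$. By (b)$\Leftrightarrow$(d) this relation is antisymmetric on global monomials, and it is clearly reflexive and transitive, so it is a partial order. The finite set $\{u_1,\ldots,u_m\}$ therefore has a minimal element, say $u_1$. Since $u_1$ is a global monomial, $u_1={\bf z}_{t}^{\bf h}$ for some chart ${\bf z}_t$ and some ${\bf h}\in\mathbb Z^r$, and so $\mathsf P_{u_1}^t=\{{\bf h}\}$.

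Next I claim that ${\bf h}\notin\mathsf P_{u_j}^t$ for every $j\neq 1$. Suppose instead that ${\bf h}\in\mathsf P_{u_j}^t$. Then $\mathsf P_{u_1}^t=\{{\bf h}\}\subseteq\mathsf P_{u_j}^t$, and Theorem \ref{thm:polytope} (a)$\Rightarrow$(d) gives $u_1=u_j$, which contradicts distinctness. This is exactly the argument of Corollary \ref{cor:proper}(i), applied with $t_0=t$ and $v=u_1$. In fact the minimality of $u_1$ is not needed: any $u_k$ works, provided we use a chart in which $u_k$ itself is a monomial. In particular, ${\bf h}$ is not an exponent with nonzero coefficient in the expansion $L^t_{u_j}$ of any $u_j$ with $j\neq 1$.

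Finally, expand the relation in $\mathbb Z[z_{1;t}^{\pm1},\ldots,z_{r;t}^{\pm1}]$. The coefficient of ${\bf z}_t^{\bf h}$ equals $c_1\cdot 1+\sum_{j\geq 2}c_j\cdot 0=c_1\neq 0$, which contradicts the assumption that the sum is zero. I do not expect a real obstacle here. The only point requiring care is to make sure the chosen chart is one in which the distinguished monomial is literally a single Laurent monomial with coefficient $1$, and this is guaranteed by the definition of $\mathcal U^{\rm mono}(\mathbb V)$. The substantive work has already been done in Theorem \ref{thm:polytope}, which rules out the possibility that two distinct global monomials share a monomial in this way.
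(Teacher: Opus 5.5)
Your proof is correct and is essentially the paper's argument: you write one $u_k$ as a single monomial ${\bf z}_t^{\bf h}$ in a suitable chart, use Theorem \ref{thm:polytope} (a)$\Rightarrow$(d) to get ${\bf h}\notin\mathsf P^t_{u_j}$ for $j\neq k$, and then read off the coefficient $c_k$ in that chart. The partial-order detour is superfluous, as you note yourself; in fact, by (b)$\Rightarrow$(d), your relation $\preceq$ is just equality on global monomials.
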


\begin{proof}
Suppose that we have a linear relation $\sum_{i=1}^m c_i u_i=0$,
where $c_i\in\mathbb Z$ and $u_1,\ldots,u_m$ are distinct global monomials. For the global monomial $u_1$, and there exist a chart ${\bf z}_{t_1}$ and ${\bf h}_1\in\mathbb Z^r$ such that $u_1={\bf z}_{t_1}^{{\bf h}_1}$. Thus 
\[
\mathsf P_{u_1}^{t_1}=\{{\bf h}_1\}.
\]
For any other global monomial $u_j$ with $j\in[2,m]$, the vector ${\bf h}_1$ can not belong to $\mathsf P_{u_j}^{t_1}$. Otherwise
\[
\mathsf P_{u_1}^{t_1}=\{{\bf h}_1\}\subseteq \mathsf P_{u_j}^{t_1}.
\]
Then Theorem \ref{thm:polytope} implies that $u_1=u_j$. This contradicts that $u_1$ and $u_j$ are different. In particular, the Laurent expansion of $u_j$ with respect to the chart ${\bf z}_{t_1}$  has zero coefficient at the Laurent monomial ${\bf z}_{t_1}^{{\bf h}_1}$ for any $j\in [2,m]$.

Therefore, in the Laurent expansion of $\sum_{{\bf h}\in\mathbb Z^r}\lambda_{\bf h}{\bf z}_{t_1}^{\bf h}$ of $\sum_{i=1}^m c_i u_i$ with respect to the chart ${\bf z}_{t_1}$, the coefficient $\lambda_{{\bf h}_1}$ of ${\bf z}_{t_1}^{{\bf h}_1}$ is exactly $c_1$. Since $\sum_{{\bf h}\in\mathbb Z^r}\lambda_{\bf h}{\bf z}_{t_1}^{\bf h}=\sum_{i=1}^m c_i u_i=0$, we have $\lambda_{\bf h}=0$ for any ${\bf h}\in\mathbb Z^r$. In particular, $c_1=\lambda_{{\bf h}_1}=0$.

By applying the same argument, we obtain $$c_1=\cdots=c_m=0.$$ Hence, the global monomials on $\mathbb V$ are linearly independent over $\mathbb Z$. 
\end{proof}

We note that the linear independence of global monomials on positive spaces can be viewed as a generalization of the basic fact that the Laurent monomials in $\mathbb Z[z_1^{\pm 1},\ldots, z_r^{\pm 1}]$ are linearly independent. To see this, let us consider the positive space $\mathbb V$ given by  $\mathbb F(\mathbb V)=\mathbb Q(z_1,\ldots,z_r)$ with a single chart ${\bf z}=(z_1,\ldots,z_r)$. Then $\mathcal U(\mathbb V)=\mathbb Z[z_1^{\pm 1},\ldots, z_{r}^{\pm 1}]$ and the global monomials on $\mathbb V$ are exactly the Laurent monomials in $z_1,\ldots,z_r$. In the next section, we will give more non-trivial examples of positive spaces.

\section{Examples and applications}\label{sec:application}
In this section, we give some natural examples of positive spaces from cluster theory. Our general results on positive spaces can be applied to these examples.
\subsection{Mutation of matrices}
The notion of mutation of matrices is introduced by Fomin and Zelevinsky \cite{fz_2002}, which plays an important role in cluster theory.
\begin{definition}
    Let $A=(a_{ij})$ be an $m\times n$ integer matrix. For any integer $k$ with $1\leq k\leq {\rm min}\{m,n\}$, the  {\em mutation} of $A$
   in direction $k$ is
defined to be the new integer matrix $\mu_k(A)=A'=(a_{ij}')$ given by
\begin{eqnarray}\label{eqn:b-mutation}
a_{ij}^\prime&=&\begin{cases}-a_{ij}, & \text{if}\;i=k\;\text{or}\;j=k,\\
 a_{ij}+[a_{ik}]_+[a_{kj}]_+-[-a_{ik}]_+[-a_{kj}]_+,&\text{otherwise},\end{cases}\nonumber
\end{eqnarray}
where $[a]_+:=\max\{a,0\}$ for any $a\in\mathbb R$.
\end{definition}

It is easy to check that $\mu_k^2(A)=A$. An $n\times n$ integer matrix $B=(b_{ij})$ is said to be 
 \begin{itemize}
     \item {\em sign-skew-symmetric}, if for any $i$ and $j$, either $b_{ij}=b_{ji}=0$ or $b_{ij}b_{ji}<0$;
     \item {\em totally sign-skew-symmetric}, if $B$ is sign-skew-symmetric and $\overset{\leftarrow}{\mu}(B)$ remains sign-skew-symmetric for  any sequence $\overset{\leftarrow}{\mu}$ of mutations;
     \item {\em skew-symmetrizable}, if there exists a diagonal integer matrix $D=diag(d_1,\ldots,d_n)$ with each $d_i>0$ such that $DB$ is skew-symmetric. Such a diagonal matrix $D$ is called a {\em skew-symmetrizer} of $B$.
 \end{itemize}
 
Clearly, every skew-symmetrizable matrix is sign-skew-symmetric. The following result shows that, in fact, skew-symmetrizable matrices are totally sign-skew-symmetric.

\begin{proposition}[{\cite{fz_2002}}]
  If $B=(b_{ij})_{n\times n}$ is skew-symmetrizable, then $B':=\mu_k(B)$ is still skew-symmetrizable and the two matrices $B, B'$ share the same skew-symmetrizers. 
\end{proposition}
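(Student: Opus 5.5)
The plan is to fix a skew-symmetrizer $D=\mathrm{diag}(d_1,\ldots,d_n)$ of $B$ with all $d_i>0$, so that $d_ib_{ij}=-d_jb_{ji}$ for all $i,j$. I will then show directly that $DB'$ is skew-symmetric, where $B'=\mu_k(B)$ is given by the mutation formula. Since $D$ is unchanged, this proves at once that $B'$ is skew-symmetrizable and that every skew-symmetrizer of $B$ is one of $B'$. Because $\mu_k$ is an involution, the symmetric argument gives the reverse inclusion, so the two matrices share the same skew-symmetrizers.

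The check splits into two cases according to the mutation formula.

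First, suppose $i=k$ or $j=k$. Then $b'_{ij}=-b_{ij}$ and $b'_{ji}=-b_{ji}$, so
$$d_ib'_{ij}=-d_ib_{ij}=d_jb_{ji}=-d_jb'_{ji}.$$
This also covers the diagonal case $i=j=k$; all other diagonal entries stay $0$, because $b_{ii}=0$ and $[b_{ik}]_+[b_{ki}]_+-[-b_{ik}]_+[-b_{ki}]_+=0$ when $b_{ik}$ and $b_{ki}$ have opposite signs or both vanish.

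Second, suppose $i,j\neq k$. Here I need
$$d_i\bigl([b_{ik}]_+[b_{kj}]_+-[-b_{ik}]_+[-b_{kj}]_+\bigr)=-d_j\bigl([b_{jk}]_+[b_{ki}]_+-[-b_{jk}]_+[-b_{ki}]_+\bigr).$$
The key observation is that positive scalars commute with $[\,\cdot\,]_+$. Write $d_ib_{ik}=-d_kb_{ki}$ and $d_jb_{jk}=-d_kb_{kj}$. Then
$$d_i[b_{ik}]_+[b_{kj}]_+=d_k[-b_{ki}]_+[b_{kj}]_+=d_j[-b_{ki}]_+[-b_{jk}]_+.$$
In the same way,
$$d_i[-b_{ik}]_+[-b_{kj}]_+=d_j[b_{ki}]_+[b_{jk}]_+.$$
Subtracting gives exactly the required identity, and adding it to $d_ib_{ij}=-d_jb_{ji}$ finishes this case.

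The only delicate point is bookkeeping the signs inside $[\,\cdot\,]_+$ when transferring the factors $d_i$, $d_k$, $d_j$. This works precisely because these factors are positive. Integrality of $B'$ is immediate from the formula.
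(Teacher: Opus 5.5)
Your proof is correct: checking entrywise that $DB'$ is skew-symmetric via $c[a]_+=[ca]_+$ for $c>0$, and then using the involutivity $\mu_k^2=\mathrm{id}$ to get the reverse inclusion of skew-symmetrizers, is the standard verification. The paper gives no proof of its own and simply cites Fomin--Zelevinsky, for which your direct entrywise computation is the usual argument.
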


\subsection{Positive spaces from cluster algebras}
In this subsection, we fix a pair $(n,m)$ of integers with $0<n\leq  m$. Let $\mathbb F$ be the field of rational functions in $m$ variables over $\mathbb Q$.

 A {\em  seed} of rank $n$ in $\mathbb F$ is a pair
$({\bf x}, \widetilde B)$, where
\begin{itemize}
	\item ${\bf x} = (x_1, \ldots, x_{m})$ is an ordered set of free generators of $\mathbb F$ over $\mathbb Q$;
	\item  $\widetilde B=\begin{bmatrix}
       B\\ P
   \end{bmatrix}=(b_{ij})$ is an $m\times n$ matrix such that its top $n\times n$ submatrix $B$ is totally sign-skew-symmetric matrix\footnote{Here we follow the setting of \cite{fz_2002}, where cluster algebras are defined for totally sign-skew-symmetric matrices.}.
\end{itemize}
In this case, the ordered set ${\bf x}$ is called the {\it cluster} of $({\bf x}, \widetilde B)$. The elements of ${\bf x}$ are called
{\it cluster variables}. More precisely, we call $x_1,\ldots,x_n$ {\em unfrozen cluster variables} and $x_{n+1},\ldots,x_{m}$ {\em frozen (cluster) variables}.
The matrices $B$ and $P$ are respectively called the {\it  exchange matrix} and the {\it coefficient matrix} of $({\bf x}, \widetilde B)$.

 The {\em ${\bf A}$-mutation} of a seed $({\bf x}, \widetilde B)$ of rank $n$ in direction $k\in[1,n]$ is the pair  $({\bf x}',  \widetilde  B')=\mu_k({\bf x},  \widetilde B)$ given by $\widetilde B'=\mu_k(\widetilde B)$ and
\begin{eqnarray}
\label{eqn:x-mutation}
 x_i^\prime=\begin{cases}x_i,&
 \text{if}\;i\neq k;\\
 x_k^{-1}\cdot (\prod_{j=1}^mx_j^{[b_{jk}]_+}+\prod_{j=1}^mx_j^{[-b_{jk}]_+}),&\text{if}\;i= k.\end{cases}
\end{eqnarray}
One can check that the new pair  $({\bf x}', \widetilde B')=\mu_k({\bf x}, \widetilde B)$ is still a seed in $\mathbb F$ and we have $\mu_k^2({\bf x},  \widetilde B)=({\bf x}, \widetilde B)$. Since the relation in \eqref{eqn:x-mutation} is subtraction-free , we have $\mathbb Q_{\rm sf}({\bf x})=\mathbb Q_{\rm sf}({\bf x}')$.

Let $\mathbb T_n$ denote the $n$-regular tree. We
 label the edges of $\mathbb T_n$ by $1,\ldots, n$ such that the $n$ different edges adjacent to the same vertex of $\mathbb T_n$ receive different labels.

\begin{definition}[Cluster pattern] A {\em cluster pattern} $\mathcal S_X=\{({\bf x}_t,  \widetilde B_t)\mid t\in \mathbb T_n\}$ in $\mathbb F$
	is an assignment of a seed $({\bf x}_t, \widetilde B_t)$ to
 	every vertex $t$ of $\mathbb T_n$ such that $({\bf x}_{t'},  \widetilde B_{t'})$ is the ${\bf A}$-mutation of $({\bf x}_t,  \widetilde B_t)$ in direction $k$ whenever
	\begin{xy}(0,1)*+{t}="A",(10,1)*+{t'}="B",\ar@{-}^k"A";"B" \end{xy} in $\mathbb T_n$.
\end{definition}
We note that the frozen variables are the same in any two clusters of $\mathcal S_X$.

\begin{definition}
    Let  $\mathcal S_X=\{({\bf x}_t,  \widetilde B_t)\mid t\in \mathbb T_n\}$ be a cluster pattern  in $\mathbb F$.
    \begin{itemize}
    \item [(i)] The {\em cluster algebra} $\mathcal A$ associated with the cluster pattern $\mathcal S_X$ is the $\mathbb Z$-subalgebra of $\mathbb F$ generated by all the cluster variables (frozen and unfrozen), together with the inverse of frozen variables, i.e.,
 $$\mathcal A\coloneqq \mathbb Z[x_{1;t},\ldots,x_{n;t},x_{n+1;t}^{\pm 1},\ldots,x_{m;t}^{\pm 1}\mid t\in\mathbb T_n].$$
 \item[(ii)] The {\em upper cluster algebra} $\mathcal U$ associated with the cluster pattern $\mathcal S_X$ is defined by the following intersection of Laurent polynomial algebras: 
 \[
 \mathcal U\coloneqq \bigcap_{t\in\mathbb T_n}\mathbb Z[x_{1;t}^{\pm 1},\ldots,x_{m;t}^{\pm 1}].\]
 \item [(iii)] The {\em positive ${\bf A}$-space} associated with the cluster pattern $\mathcal S_X$ is the positive space given by
 \[\mathbb A\coloneqq (\mathbb F, \{{\bf x}_t\}_{t\in\mathbb T_n}).\]
\end{itemize}
\end{definition}

We see that the coordinate algebra $\mathcal U(\mathbb A)$ of a positive ${\bf A}$-space $\mathbb A$ is exactly the corresponding upper cluster algebra.

\begin{theorem}[{\cite{fz_2002}, Laurent phenomenon}] Let $\mathcal A$ be a cluster algebra and $z$ a cluster variable of $\mathcal A$. Then for any seed $({\bf x}_t, \widetilde B_t)$ of $\mathcal A$, we have $z\in\mathbb Z[x_{1;t}^{\pm 1},\ldots, x_{m;t}^{\pm 1}]$. In particular, $\mathcal A$ is contained in the corresponding upper cluster algebra $\mathcal U$.
\end{theorem}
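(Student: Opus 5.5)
The plan is to reduce the Laurent phenomenon to the case of a single mutation step seen from a fixed seed, and to argue by induction on the distance in $\mathbb T_n$ between the seed of $z$ and the seed $t$. Fix the seed $({\bf x}_t,\widetilde B_t)$; any cluster variable $z$ lies in some seed $t'$, reached from $t$ by a path $t=t_0\overset{k_1}{-}t_1\overset{k_2}{-}\cdots\overset{k_\ell}{-}t_\ell=t'$. Frozen variables are unchanged along the path, so they are trivially in $\mathbb Z[x_{1;t}^{\pm1},\ldots,x_{m;t}^{\pm1}]$, and only unfrozen cluster variables need attention. Following Fomin--Zelevinsky, I would prove the stronger statement that every cluster variable lies in $\mathbb Z[x_{1;t}^{\pm1},\ldots,x_{n;t}^{\pm 1},x_{n+1;t},\ldots,x_{m;t}]$ (coefficients polynomial in frozen variables). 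This is easier to induct on, because it lets us use coprimality arguments in a UFD.

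The core step is the caterpillar lemma. Consider a path in which the first two mutations are in directions $k\neq k'$ and the rest is arbitrary. Then show that if every cluster variable at the end of the path is Laurent in the seeds $t_1$ and $t_2$ (by induction on length), it is Laurent in $t_0$. Concretely, write $x_k'$ for the variable obtained from ${\bf x}_{t_0}$ by mutating at $k$. Also let $x_{k'}''$ be the new variable in $t_2$. Using the induction hypothesis, express a target variable $z$ both as a Laurent polynomial in ${\bf x}_{t_1}$ and as one in the cluster obtained by mutating $t_0$ at $k'$ (via the symmetric path). Substituting the exchange relations \eqref{eqn:x-mutation}, one gets
\[
z\in \mathbb Z[{\bf x}_{t_0}^{\pm1}][ (x_k')^{-1}]\cap \mathbb Z[{\bf x}_{t_0}^{\pm1}][(x_{k'}^{\ast})^{-1}],
\]
where $x_{k'}^\ast$ is the mutated $k'$-variable. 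Here $x_k'$ and $x_{k'}^\ast$ are, up to Laurent monomials, the exchange binomials $P_k=\prod x_j^{[b_{jk}]_+}+\prod x_j^{[-b_{jk}]_+}$ and $P_{k'}$. So it suffices to show that $P_k$ and $P_{k'}$ (and the relevant $P_k$ and its image after mutation at $k'$) are coprime in the Laurent polynomial ring. Then an element with only $P_k$ in its denominator and also only $P_{k'}$ in its denominator is Laurent.

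The main obstacle is precisely this coprimality check, which is where total sign-skew-symmetry enters. For $P_k$, being a binomial not involving $x_k$, one must check that it is not divisible by the irreducible factors of the other exchange polynomial, in particular of $\mu_{k'}$-mutated version $P_k|_{x_{k'}\leftarrow P_{k'}/x_{k'}}$. I would follow Fomin--Zelevinsky: reduce by the sign conditions $b_{kk'}b_{k'k}<0$ or both zero, which must hold at every seed. In the zero case the variables separate and coprimality is immediate. In the nonzero case, one substitutes and checks that the resulting polynomial, after clearing monomials, is not divisible by $P_{k'}$ by evaluating at a point where $P_{k'}=0$ and the other factor is nonzero. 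Here the sign-opposition of $b_{kk'}$ and $b_{k'k}$ is used, and the polynomiality in frozen variables ensures that no frozen monomial factors obstruct the argument. Once coprimality is established, the induction closes, yielding $z\in\mathbb Z[x_{1;t}^{\pm1},\ldots,x_{m;t}^{\pm1}]$. Since $\mathcal A$ is generated by such $z$ together with inverses of frozen variables, which are Laurent monomials in every cluster, we get $\mathcal A\subseteq\mathcal U$.
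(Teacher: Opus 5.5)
\textbf{There is a genuine gap: the induction, as you set it up, does not close.}

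In $\mathbb T_n$, the only neighbour of $t_0$ that is closer to the seed $t'$ carrying $z$ is $t_1=\mu_k(t_0)$. The vertex $\mu_{k'}(t_0)$ is at distance $\ell+1$ from $t'$, so the induction hypothesis says nothing about it. (Only when $b_{kk'}=0$ do the two mutations commute; then $\mu_{k'}(t_0)$ carries the same seed as $\mu_k\mu_{k'}\mu_k(t_0)$, which is closer.) Hence the membership $z\in\mathbb Z[{\bf x}_{t_0}^{\pm1}][(x_k')^{-1}]\cap\mathbb Z[{\bf x}_{t_0}^{\pm1}][(x_{k'}^{\ast})^{-1}]$ is unjustified; it is as strong as what you are trying to prove.

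The seeds your induction does cover, $t_1$ and $t_2=\mu_{k'}(t_1)$, do not suffice either. Set $\mathcal L_0:=\mathbb Z[{\bf x}_{t_0}^{\pm1}]$. Both clusters contain $x_k'$, so both Laurent rings lie in $\mathcal L_0[(x_k')^{-1}]$, and intersecting them cannot clear the denominator $x_k'$.

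The missing idea of the caterpillar lemma (the Fomin--Zelevinsky argument the paper cites) is to use $t_3=\mu_k(t_2)$, on the path $t_0\overset{k}{-}t_1\overset{k'}{-}t_2\overset{k}{-}t_3$. The vertex $t_3$ is strictly closer to $t'$ than $t_0$, and its cluster contains $x_{k'}''$ and a new variable $x_k'''$ but no longer $x_k'$. You must then prove two things:
\begin{itemize}
\item $x_{k'}''$ and $x_k'''$ lie in $\mathcal L_0$; for $x_k'''$ this is a real computation with the exchange relations and the matrix mutation rule;
\item both are coprime to $x_k'$ in the UFD $\mathcal L_0$.
\end{itemize}
Then $z\in\mathcal L_0[(x_k')^{-1}]\cap\mathcal L_0[(x_{k'}''x_k''')^{-1}]=\mathcal L_0$.

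Second, your coprimality step is wrong as stated, beyond $P_k,P_{k'}$ being the wrong pair. The claim that for $b_{kk'}=b_{k'k}=0$ ``the variables separate and coprimality is immediate'' fails once frozen variables are present. Take $n=2$, $m=3$, $B=0$ and frozen row $(1,1)$. Then $P_1=P_2=1+x_3$. Even the correct pair fails: $x_1'=(1+x_3)/x_1$ and $x_2''=(1+x_3)/x_2$ share the non-unit factor $1+x_3$, whether or not $x_3$ is inverted.

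This is why coprimality is an explicit hypothesis of the caterpillar lemma, and why you need a preliminary reduction to sufficiently generic coefficients. One way to do this:
\begin{itemize}
\item Adjoin extra frozen rows, e.g.\ an identity block, so that $\widetilde B$ has full column rank; mutation preserves this.
\item In the case $b_{kk'}=0$, the two relevant exchange binomials then have non-parallel Newton segments and are coprime.
\item Prove the Laurent property for the enlarged pattern, then specialize the extra frozen variables to $1$.
\end{itemize}
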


\begin{example}
\label{ex:A-A2}
   Take $B=\begin{bmatrix}
    0&1\\-1&0
\end{bmatrix}$ and ${\bf x}=(x_1,x_2)$. 
It is easy to  check that the  cluster pattern defined by the initial seed $({\bf x}, B)$  has only five  clusters up to permutation: \[(x_1,x_2),\;(x_2,x_3),\;(x_3,x_4),\;(x_4,x_5),\;(x_5,x_1),\quad \text{where}\]
\[x_3:=   \frac{x_2+1}{x_1}, \;x_4:=   \frac{x_1+x_2+1}{x_1x_2}, \;x_5:=  \frac{x_1+1}{x_2}.\]
\end{example}

A monomial in cluster variables from the same cluster is called a {\em cluster monomial}.  Our general results on positive spaces yield the following results for cluster algebras.

\begin{corollary}\label{cor:A-side}
Let $\mathcal A$ be a (totally sign-skew-symmetric) cluster algebra, and let $\mathbb A=(\mathbb F, \{{\bf x}_t\}_{t\in\mathbb T_n})$  be the corresponding positive ${\bf A}$-space. Fix a cluster ${\bf x}_{t_0}$. Then:
\begin{itemize}
    \item[(i)] If $u$ is a cluster monomial and  $u\notin \{{\bf x}_{t_0}^{\bf h}\mid {\bf h}\in\mathbb N^m\}$, then the Laurent expansion of $u$ with respect to ${\bf x}_{t_0}$ is a linear combination of proper Laurent monomials in ${\bf x}_{t_0}$.
    \item [(ii)] The cluster monomials of $\mathcal A$ are linearly independent over $\mathbb Z$.
\end{itemize}
\end{corollary}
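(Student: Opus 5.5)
The plan is to reduce both parts to the general results on positive spaces, Corollary \ref{cor:proper} and Theorem \ref{thm:linear-indp}, applied to the positive ${\bf A}$-space $\mathbb A=(\mathbb F,\{{\bf x}_t\}_{t\in\mathbb T_n})$. Here the coordinate dimension is $r=m$. The only real work is to identify cluster monomials with suitable global monomials on $\mathbb A$, and to check the hypothesis of Corollary \ref{cor:proper}(ii).

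\emph{Step 1: cluster monomials are global monomials.} Let $u={\bf x}_t^{\bf h}$ with ${\bf h}\in\mathbb N^m$. By the Laurent phenomenon, each cluster variable $x_{i;t}$ lies in $\mathbb Z[x_{1;s}^{\pm1},\ldots,x_{m;s}^{\pm1}]$ for every $s\in\mathbb T_n$. Products of Laurent polynomials are Laurent polynomials, so $u$ lies in every such ring. Hence $u\in\mathcal U(\mathbb A)$, and since $u$ is a monomial in the chart ${\bf x}_t$, it belongs to $\mathcal U^{\rm mono}(\mathbb A)$. The same argument applied to the fixed chart ${\bf x}_{t_0}$ gives $\{{\bf x}_{t_0}^{\bf h}\mid{\bf h}\in\mathbb N^m\}\subseteq M(t_0)$, which is exactly the hypothesis of Corollary \ref{cor:proper}(ii). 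Frozen variables need no separate treatment: they are common to all clusters, so even their inverses lie in every Laurent ring. The sign-skew-symmetric setting changes nothing, since only the Laurent phenomenon and the subtraction-freeness of \eqref{eqn:x-mutation} are used.

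\emph{Step 2: part (i).} Let $u$ be a cluster monomial not of the form ${\bf x}_{t_0}^{\bf h}$ with ${\bf h}\in\mathbb N^m$. We want to apply Corollary \ref{cor:proper}(ii), which requires $u\notin M(t_0)$, where $M(t_0)$ allows arbitrary ${\bf h}\in\mathbb Z^m$.

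This is the only subtle point. Suppose instead $u={\bf x}_{t_0}^{\bf h}$ with some $h_i<0$. We only know that $u$ lies in $\mathbb Z[{\bf x}_{t_0}^{\pm1}]$, not in the polynomial ring, so we cannot rule this out that way. The cleanest route is to avoid $M(t_0)$ and apply Corollary \ref{cor:proper}(i) with $M'=\{{\bf x}_{t_0}^{\bf h}\mid {\bf h}\in\mathbb N^m\}$. Its proof only needs $u\neq v$ for each $v\in M'$, and $v$ global, and it relies on Theorem \ref{thm:polytope}.

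Concretely, suppose some ${\bf h}_0\in\mathbb N^m$ has ${\bf x}_{t_0}^{{\bf h}_0}$ appearing in the expansion of $u$. Then $\{{\bf h}_0\}\subseteq\mathsf P_u^{t_0}$. Theorem \ref{thm:polytope} applied to the global monomials $v={\bf x}_{t_0}^{{\bf h}_0}$ and $u$ then gives $u=v$, contradicting the assumption on $u$. So no monomial with ${\bf h}\in\mathbb N^m$ appears, and every monomial in the expansion is proper. This rederivation, rather than quoting Corollary \ref{cor:proper}(ii) verbatim, is where care is needed.

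\emph{Step 3: part (ii).} Take finitely many distinct cluster monomials and suppose they satisfy a $\mathbb Z$-linear relation. By Step 1 they are distinct elements of $\mathcal U^{\rm mono}(\mathbb A)$. Theorem \ref{thm:linear-indp} therefore forces all coefficients to vanish.

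One remark is needed on what "distinct" means. Cluster monomials are counted as elements of $\mathbb F$, so equal products from different clusters count once. The statement concerns the set of cluster monomials, so this is harmless.
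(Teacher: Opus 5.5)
Your proposal is correct and follows the paper's route: the Laurent phenomenon makes every cluster monomial a global monomial on $\mathbb A$, and then Corollary~\ref{cor:proper} and Theorem~\ref{thm:linear-indp} give (i) and (ii). In (i) you rerun the Theorem~\ref{thm:polytope} argument with $\{{\bf x}_{t_0}^{\bf h}\mid {\bf h}\in\mathbb N^m\}$ in place of $M(t_0)$; this legitimately covers a point the paper's one-line citation passes over, though that point is also trivial to dispatch directly, since if $u={\bf x}_{t_0}^{\bf h}$ with ${\bf h}\in\mathbb Z^m\setminus\mathbb N^m$, its expansion is that single monomial, which is already proper.
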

\begin{proof}
Thanks to the Laurent phenomenon, we know that each cluster monomial in $\mathcal A$ is a global monomial on $\mathbb A$. Then
the results follow from Corollary \ref{cor:proper} and Theorem \ref{thm:linear-indp}.
\end{proof}

\begin{remark}
 (i) The property in Corollary \ref{cor:A-side} (i) is known as the {\em proper Laurent monomial property} (cf. \cite[Definition 6.1]{cl-2012}), which is a consequence of the global monomial avoidance phenomenon for positive spaces.

(ii) The proper Laurent monomial property and the linear independence of cluster monomials have been proved for skew-symmetric cluster algebras in \cite{cklp-2013} via categorification, for skew-symmetrizable cluster algebras in \cite{ghkk18} via scattering diagrams, and for all {\em acyclic} totally sign-skew-symmetric cluster algebras in \cite{Huang-2019} using the unfolding approach based on the results in \cite{cklp-2013}.

(iii) We note that positive spaces also arise naturally in the theory of generalized cluster algebras introduced by Chekhov and Shapiro \cite{cs-2014}, and our general results also apply to generalized cluster algebras whose clusters are related by subtraction-free birational transformations.
\end{remark}

Finally, we give a description of the global monomials on positive ${\bf A}$-spaces, whose proof is a simple application of the Laurent phenomenon.
\begin{proposition}\label{pro:global-A-monomial}
Let $\mathbb A=(\mathbb F, \{{\bf x}_t\}_{t\in\mathbb T_n})$ be a positive ${\bf A}$-space associated with a cluster pattern $\mathcal S_X=\{({\bf x}_t,  \widetilde B_t)\mid t\in \mathbb T_n\}$,
and  let $\mathcal U^{\rm mono}(\mathbb A)$ be the set of global monomials on $\mathbb A$. Then we have
\[\mathcal U^{\rm mono}(\mathbb A)=\bigcup_{t\in\mathbb T_n}\{ {\bf x}_t^{{\bf h}}\mid {\bf h}=(h_1,\ldots,h_m)^T\in\mathbb Z^m\;\;\text{with}\;\;h_k\geq 0\;\;\text{for}\;\;k\in[1,n]\}.
\]
In particular, global monomials on $\mathbb A$ and cluster monomials in $\mathcal U(\mathbb A)$ are the same up to frozen factors.
\end{proposition}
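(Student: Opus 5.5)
Two inclusions need to be proved. For ``$\supseteq$'', fix $t\in\mathbb T_n$ and ${\bf h}\in\mathbb Z^m$ with $h_k\geq 0$ for $k\in[1,n]$. Split ${\bf x}_t^{\bf h}$ as a product of a cluster monomial in the unfrozen variables $x_{1;t},\ldots,x_{n;t}$ and a Laurent monomial in the frozen variables $x_{n+1;t},\ldots,x_{m;t}$. The frozen variables are common to all clusters, so the frozen factor is a Laurent monomial in every chart ${\bf x}_{t'}$. By the Laurent phenomenon, each $x_{k;t}$ lies in $\mathbb Z[x_{1;t'}^{\pm1},\ldots,x_{m;t'}^{\pm1}]$. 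Products of Laurent polynomials are Laurent polynomials, so ${\bf x}_t^{\bf h}\in\mathcal U(\mathbb A)$.

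For ``$\subseteq$'', let ${\bf x}_t^{\bf h}\in\mathcal U(\mathbb A)$. We must show $h_k\geq 0$ for every $k\in[1,n]$. Suppose instead that $h_k<0$ for some $k\in[1,n]$, and let $t'$ be the vertex joined to $t$ by the edge labelled $k$. In the chart ${\bf x}_{t'}$ we have $x_{i;t}=x_{i;t'}$ for $i\neq k$, and
\[x_{k;t}=x_{k;t'}^{-1}\,P,\qquad P=\prod_j x_{j;t'}^{[b_{jk}]_+}+\prod_j x_{j;t'}^{[-b_{jk}]_+},\]
where the products involve only variables with $j\neq k$, since $b_{kk}=0$. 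Therefore
\[{\bf x}_t^{\bf h}= x_{k;t'}^{-h_k}\,P^{h_k}\prod_{i\neq k}x_{i;t'}^{h_i},\]
and with $h_k<0$ this contains the factor $P^{h_k}=1/P^{|h_k|}$. To conclude that this is not a Laurent polynomial in ${\bf x}_{t'}$, it suffices to show that $P$ is not a unit in $\mathbb Z[{\bf x}_{t'}^{\pm1}]$, i.e.\ not $\pm$ a monomial. The ring $\mathbb Z[{\bf x}_{t'}^{\pm1}]$ is a UFD whose units are the $\pm$ Laurent monomials. Since $P$ is coprime to $x_{k;t'}$, if $P$ is a nonunit then $P^{|h_k|}$ cannot divide $x_{k;t'}^{-h_k}\prod_{i\ne k}x_{i;t'}^{h_i}$, which is a unit.

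The main obstacle is exactly this nonunit check. $P$ is a sum of two monomials, and it is a unit only if the two monomials coincide, giving $P=2M$; even then $2$ is not a unit in $\mathbb Z$, so the factor $1/2^{|h_k|}$ already prevents integrality. Thus either $P$ has two distinct terms, so its Newton polytope is a segment rather than a point and it is not a unit, or $P=2M$. In both cases ${\bf x}_t^{\bf h}\notin\mathbb Z[{\bf x}_{t'}^{\pm1}]$, which contradicts ${\bf x}_t^{\bf h}\in\mathcal U(\mathbb A)$.

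The ``in particular'' statement is now immediate: each element of the right-hand side is a cluster monomial multiplied by a Laurent monomial in the frozen variables, and the frozen variables are shared by all clusters.
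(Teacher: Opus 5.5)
Your proof is correct and follows the paper's argument: the inclusion $\supseteq$ comes from the Laurent phenomenon, and $\subseteq$ comes from mutating at an unfrozen direction $k$ with $h_k<0$ and observing that the exchange binomial $P$ is not a unit in $\mathbb Z[{\bf x}_{t'}^{\pm1}]$. You also treat explicitly the degenerate case $P=2M$ (the $k$-th column of $\widetilde B_t$ is zero), which the paper leaves implicit. The coprimality remark is unnecessary, since a nonunit cannot divide a unit.
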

\begin{proof}
The proof is similar to that of \cite[Lemma 4.2]{cdsgl-2023}, where the setting is positive ${\bf A}$-spaces with no frozen variables.
 For the convenience of the reader, we give a proof here. 
 The containment $\supseteq$ is a consequence of the Laurent phenomenon; we therefore focus on the other direction. Let $u\in \mathcal U^{\rm mono}(\mathbb A)$ be a global monomial on $\mathbb A$.  Then it is of the form $u={\bf x}_t^{\bf h}$ for some $t\in\mathbb T_n$ and ${\bf h}=(h_1,\ldots,h_m)^T\in\mathbb Z^m$. Assume there exists a unfrozen direction $k\in [1,n]$ such that $h_k<0$. We consider the ${\bf A}$-mutation $({\bf x}_{t'}, \widetilde B_{t'})=\mu_k({\bf x}_t, \widetilde B_t)$. We have
 $$x_{i;t} = \begin{cases}
x_{i;t'}, &\text{if } i \neq k, \\
{P}_{k;t'}/x_{k;t'}, &\text{if } i = k,
\end{cases}$$
where $P_{k;t'}\coloneqq \prod_{j=1}^mx_{j;t'}^{[b_{jk}^{t'}]_+}+\prod_{j=1}^mx_{j;t'}^{[-b_{jk}^{t'}]_+}$ is a non-invertible element in $\mathbb Z[x_{1;t'}^{\pm 1},\ldots, x_{m;t'}^{\pm 1}]$.
Then we can see that the expansion of $u={\bf x}_t^{\bf h}$ with respect to the cluster ${\bf x}_{t'}$ can not be a Laurent polynomial in $\mathbb Z[x_{1;t'}^{\pm 1},\ldots, x_{m;t'}^{\pm 1}]$. Hence, we must have $h_k\geq 0$ for $k\in[1,n]$.
\end{proof}

\subsection{Positive spaces from $Y$-patterns}
In this subsection, we give examples of positive spaces from $Y$-patterns \cite{fomin_zelevinsky_2007}. We fix a positive integer $n$ and let $\mathbb F$ be the field of rational functions in $n$ variables over $\mathbb Q$.

A {\em $Y$-seed} in $\mathbb F$ is a pair $({\bf y}, B)$, where \begin{itemize}
	\item ${\bf y} = (y_1, \ldots, y_{n})$ is an ordered set of free generators of $\mathbb F$ over $\mathbb Q$;
	\item  $B=(b_{ij})$ is an $n\times n$ skew-symmetrizable matrix\footnote{Here we follow the setting of \cite{fomin_zelevinsky_2007}, where $Y$-patterns are defined for skew-symmetrizable matrices.}.
\end{itemize}
In this case, the ordered set ${\bf y}$ is called the {\em $Y$-cluster} of $({\bf y}, B)$ and the elements of ${\bf y}$ are called {\em $y$-variables}.

 The {\em ${\bf Y}$-mutation} of a $Y$-seed $({\bf y}, B)$ in direction $k\in[1,n]$ is the pair  $({\bf y}', B') \coloneqq   \mu_k({\bf y},  B)$  given by $ B'=\mu_k(B)$ and
\begin{align}\label{eqn:y-mutation}
 y_i^\prime&=\begin{cases}y_k^{-1}, &\text{if}\;i=k; \\
y_iy_k^{[b_{ki}]_+}(1+y_k)^{- b_{ki}},&\text{otherwise}.\end{cases}
\end{align}
One can check that the new pair  $({\bf y}', B')=\mu_k({\bf y},  B)$ is still a $Y$-seed in $\mathbb F$ and we have $\mu_k^2({\bf y},  B)=({\bf y},  B)$. Since the relation in \eqref{eqn:y-mutation} is subtraction-free, we have $\mathbb Q_{\rm sf}({\bf y})=\mathbb Q_{\rm sf}({\bf y}')$.

\begin{definition}[$Y$-pattern] A {\em $Y$-pattern} $\mathcal S_Y=\{({\bf y}_t,  B_t)\mid t\in \mathbb T_n\}$ in $\mathbb F$
	is an assignment of a $Y$-seed $({\bf y}_t, B_t)$ to
 	every vertex $t$ of $\mathbb T_n$ such that $({\bf y}_{t'},  B_{t'})$ is the ${\bf Y}$-mutation of $({\bf y}_t,  B_t)$ in direction $k$ whenever
	\begin{xy}(0,1)*+{t}="A",(10,1)*+{t'}="B",\ar@{-}^k"A";"B" \end{xy} in $\mathbb T_n$.
\end{definition}
Each $Y$-pattern  $\mathcal S_Y=\{({\bf y}_t,  B_t)\mid t\in \mathbb T_n\}$ in $\mathbb F$ gives rise to a positive space
 \[\mathbb Y\coloneqq (\mathbb F, \{{\bf y}_t\}_{t\in\mathbb T_n}).\]
Such a positive space is called a {\em positive ${\bf Y}$-space}.
Its coordinate algebra $\mathcal U(\mathbb Y)$ is given by
\[
\mathcal U(\mathbb Y)=\bigcap_{t\in\mathbb T_n}\mathbb Z[y_{1;t}^{\pm 1},\ldots,y_{n;t}^{\pm 1}].
\]

\begin{example}
\label{ex:Y-A2}
   Take $B_{t_0}=\begin{bmatrix}
    0&1\\-1&0
\end{bmatrix}$ and ${\bf y}_{t_0}=(y_1,y_2)$. 
It is easy to  check that the $Y$-pattern defined by the initial $Y$-seed $({\bf y}_{t_0}, B_{t_0})$  has only five $Y$-clusters up to permutation:
\[
\begin{array}{rclcrcl}
\mathbf{y}_{t_0} &=& (y_1,\; y_2), & \qquad &
\mathbf{y}_{t_1} &=& \left( y_1^{-1},\; \dfrac{y_1 y_2}{1 + y_1} \right), \\[8pt]
\mathbf{y}_{t_2} &=& \left( \dfrac{y_2}{1 + y_1 + y_1 y_2},\; \dfrac{1 + y_1}{y_1 y_2} \right), & \qquad &
\mathbf{y}_{t_3} &=& \left( \dfrac{1 + y_1 + y_1 y_2}{y_2},\; \dfrac{1}{y_1 (1 + y_2)} \right), \\[8pt]
\mathbf{y}_{t_4} &=& \left( \dfrac{1}{y_2},\; y_1 (1 + y_2) \right). & \qquad &

\end{array}
\]
It can be seen that the $y$-variable $\frac{y_1y_2}{1+y_1}$ in ${\bf y}_{t_1}$ does not belong to $\mathcal U(\mathbb Y)$ and thus it is not a global monomial on $\mathbb Y$.
\end{example}
As the above example shows, the $y$-variables do not, in general, satisfy the Laurent phenomenon.
This is one of the key differences between the cluster patterns (positive ${\bf A}$-spaces) and the $Y$-patterns (positive ${\bf Y}$-spaces).

The following result gives a description of the global monomials on positive ${\bf Y}$-spaces.
\begin{proposition}[{\cite[Lemma 4.3]{cdsgl-2023}}] 
Let $\mathbb Y=(\mathbb F, \{{\bf y}_t\}_{t\in\mathbb T_n})$ be a positive ${\bf Y}$-space associated with a $Y$-pattern $\mathcal S_Y=\{({\bf y}_t,  B_t)\mid t\in \mathbb T_n\}$, and let $\mathcal U^{\rm mono}(\mathbb Y)$ be the set of global monomials on $\mathbb Y$. Then we have
\[
\mathcal U^{\rm mono}(\mathbb Y)=\bigcup_{t\in\mathbb T_n}\{ {\bf y}_t^{{\bf h}}\mid {\bf h}\in\mathbb Z^n\;\;\text{with}\;\;B_t{\bf h}\in\mathbb N^n\}.
\]
\end{proposition}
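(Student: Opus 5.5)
Fix a global monomial $u={\bf y}_t^{\bf h}$ and decide when it lies in every Laurent ring $\mathbb Z[{\bf y}_{t'}^{\pm1}]$; we show this happens exactly when $B_t{\bf h}\in\mathbb N^n$. By symmetry in $t$ (every vertex of $\mathbb T_n$ can serve as initial seed), it suffices to characterize which exponents ${\bf h}$ at a fixed vertex $t$ give global functions.

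\textbf{Necessity.} Suppose ${\bf y}_t^{\bf h}\in\mathcal U(\mathbb Y)$ and fix $k\in[1,n]$. Let $t'$ be the vertex joined to $t$ by an edge labelled $k$. Inverting the mutation formula \eqref{eqn:y-mutation}, each $y_{i;t}$ is a monomial in ${\bf y}_{t'}$ times $(1+y_{k;t'})^{\epsilon_i}$. Using $b^{t'}_{ki}=-b^t_{ki}$, one checks the exponent is $\epsilon_i=-b^{t}_{ki}$ up to sign conventions, and we choose the sign to match \eqref{eqn:y-mutation}. Hence
\[{\bf y}_t^{\bf h}=(\text{Laurent monomial in }{\bf y}_{t'})\cdot(1+y_{k;t'})^{\pm\sum_i b^t_{ki}h_i}=(\cdots)\,(1+y_{k;t'})^{\pm(B_t{\bf h})_k}.\]
Since $1+y_{k;t'}$ is not a unit in the Laurent ring, this expression is a Laurent polynomial exactly when the exponent of $1+y_{k;t'}$ is nonnegative. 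With the correct sign this forces $(B_t{\bf h})_k\ge0$. Ranging over all $k$ gives $B_t{\bf h}\in\mathbb N^n$.

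\textbf{Sufficiency.} This is the main obstacle, because a Laurent expansion in every chart must be controlled, not just in adjacent ones. Our approach is to realize ${\bf y}_t^{\bf h}$ through cluster variables.
\begin{itemize}
\item[1.] Take the cluster pattern with the same exchange matrices and no frozen variables, and the $\hat y$-map $\hat y_{j;s}=\prod_i x_{i;s}^{b^s_{ij}}$. It is standard (Fomin–Zelevinsky) that the $\hat y_{;s}$ mutate by \eqref{eqn:y-mutation}.
\item[2.] Then ${\bf y}_t^{\bf h}\mapsto {\bf x}_t^{B_t{\bf h}}$, which is a cluster monomial when $B_t{\bf h}\in\mathbb N^n$.
\item[3.] Pull back the Laurent phenomenon for ${\bf x}$ to obtain Laurentness in ${\bf y}$.
\end{itemize}
This pullback step is delicate: $B$ may be degenerate, so the map $\mathbb Q({\bf y})\to\mathbb Q({\bf x})$ need not be injective.

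To avoid that, we would use the separation formula instead. Express ${\bf y}_{t'}$-variables via $F$-polynomials: $y_{j;t}=\mathbf y_{t'}^{\bf c}\prod_i F_{i}({\bf y}_{t'})^{b_{ij}}$. Then ${\bf y}_t^{\bf h}$ equals a monomial in ${\bf y}_{t'}$ times $\prod_i F_{i;t}^{t'}({\bf y}_{t'})^{(B_t{\bf h})_i}$, where the $F$'s are polynomials. This is valid for skew-symmetrizable $B$, as assumed here. Nonnegativity of $B_t{\bf h}$ then gives Laurentness in ${\bf y}_{t'}$ for every $t'$.

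The part to verify carefully is the exact form of this separation formula for $y$-variables relative to an arbitrary base vertex $t'$: namely $y_{j;t}=\mathbf y_{t'}^{{\bf c}_j}\prod_iF_{i;t}^{B_{t'};t'}({\bf y}_{t'})^{b^t_{ij}}$, obtained by applying \cite{fomin_zelevinsky_2007} with $t'$ as the initial vertex.
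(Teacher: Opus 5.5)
Your proof is correct. The paper gives no proof of its own here; it quotes \cite[Lemma 4.3]{cdsgl-2023}. Your two halves are the natural ones:
\begin{itemize}
\item Necessity is the same one-step-mutation, non-unit argument the paper uses for Proposition~\ref{pro:global-A-monomial}, with $1+y_{k;t'}$ playing the role of $P_{k;t'}$.
\item Sufficiency follows from the Fomin--Zelevinsky separation formula $y_{j;t}={\bf y}_{t'}^{{\bf c}_j}\prod_i F_{i;t}^{B_{t'};t'}({\bf y}_{t'})^{b^t_{ij}}$, with $F_{i;t}^{B_{t'};t'}\in\mathbb Z[y_1,\ldots,y_n]$. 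You state this formula correctly, and it needs only skew-symmetrizability, which holds here.
\end{itemize}
The $\hat y$ detour is unnecessary and can be dropped.

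The one thing to repair is the sign in the necessity step. You should compute it rather than ``choose the sign to match''. Inverting \eqref{eqn:y-mutation} gives $y_{k;t}=y_{k;t'}^{-1}$ and
\[
y_{i;t}=y_{i;t'}\,y_{k;t'}^{[-b^t_{ki}]_+}\,(1+y_{k;t'})^{b^t_{ki}}\qquad (i\neq k).
\]
So $\epsilon_i=b^t_{ki}=-b^{t'}_{ki}$, not $-b^t_{ki}$ as you wrote. Taken literally, your sign would give the opposite condition $B_t{\bf h}\in-\mathbb N^n$. With the correct sign, ${\bf y}_t^{\bf h}$ is a Laurent monomial in ${\bf y}_{t'}$ times $(1+y_{k;t'})^{(B_t{\bf h})_k}$, as required. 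This also agrees with your separation formula at an adjacent vertex, where $F_{k;t}=1+y_k$ and $F_{i;t}=1$ for $i\neq k$.
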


The following result is a direct consequence of Corollary \ref{cor:proper} and Theorem \ref{thm:linear-indp}.
\begin{corollary}\label{cor:Y-side}
Let $\mathbb Y=(\mathbb F, \{{\bf y}_t\}_{t\in\mathbb T_n})$ be a positive ${\bf Y}$-space. Fix a $Y$-cluster ${\bf y}_{t_0}$, and let $M(t_0)\subseteq \mathcal U^{\rm mono}(\mathbb Y)$ denote the set of global monomials in ${\bf y}_{t_0}$. Then:
\begin{itemize}
    \item [(i)] If $u$ is a global monomial on $\mathbb Y$ and $u\notin M(t_0)$, then no global monomial from  ${\bf y}_{t_0}$ appears in the Laurent expansion of $u$ with respect to ${\bf y}_{t_0}$.
    \item[(ii)] The global monomials on $\mathbb Y$ are linearly independent over $\mathbb Z$.
\end{itemize}

\end{corollary}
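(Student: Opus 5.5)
This corollary is a specialization of the general results of Section~\ref{sec:results}, so the plan is to check that the hypotheses hold and then cite them. The first step is to confirm that $\mathbb Y=(\mathbb F,\{{\bf y}_t\}_{t\in\mathbb T_n})$ is a positive space in the sense of the definition. Each ${\bf y}_t$ is an ordered set of free generators of $\mathbb F$ over $\mathbb Q$, since ${\bf Y}$-mutation produces a $Y$-seed. The ${\bf Y}$-mutation \eqref{eqn:y-mutation} is subtraction-free in both directions, because $\mu_k$ is an involution. Hence $\mathbb Q_{\rm sf}({\bf y}_t)=\mathbb Q_{\rm sf}({\bf y}_{t'})$ for adjacent vertices $t,t'$. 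Since $\mathbb T_n$ is connected, induction along the unique path between any two vertices gives $\mathbb Q_{\rm sf}({\bf y}_t)=\mathbb Q_{\rm sf}({\bf y}_{t'})$ for all $t,t'\in\mathbb T_n$. Thus $\mathbb Y$ is a positive space of dimension $n$, and its coordinate algebra is $\mathcal U(\mathbb Y)$ as displayed.

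For part (i), take $u\in\mathcal U^{\rm mono}(\mathbb Y)$ with $u\notin M(t_0)$. Applying Corollary~\ref{cor:proper}(i) to the positive space $\mathbb V=\mathbb Y$ and the chart ${\bf z}_{t_0}={\bf y}_{t_0}$ shows that no element of $M(t_0)$ occurs in the Laurent expansion of $u$ with respect to ${\bf y}_{t_0}$. This is exactly statement (i).

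For part (ii), Theorem~\ref{thm:linear-indp} applied to $\mathbb V=\mathbb Y$ gives the linear independence of $\mathcal U^{\rm mono}(\mathbb Y)$ over $\mathbb Z$ directly.

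No step is a real obstacle. The only point needing care is the propagation of the equality of universal semifields from adjacent vertices to all pairs of vertices of $\mathbb T_n$, which the connectivity of the tree handles. The explicit description of $\mathcal U^{\rm mono}(\mathbb Y)$ from \cite[Lemma 4.3]{cdsgl-2023} is not needed for the argument.
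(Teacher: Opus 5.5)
Your proposal is correct and matches the paper's argument. The paper likewise notes that ${\bf Y}$-mutation is subtraction-free, so $\mathbb Y$ is a positive space, and then obtains (i) and (ii) directly from Corollary~\ref{cor:proper}(i) and Theorem~\ref{thm:linear-indp}; your propagation of $\mathbb Q_{\rm sf}({\bf y}_t)=\mathbb Q_{\rm sf}({\bf y}_{t'})$ along paths in $\mathbb T_n$ simply spells out what the paper leaves implicit.
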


\subsection{Positive spaces from Laurent phenomenon algebras}

In this subsection, we give examples of positive spaces from Laurent phenomenon (LP) algebras \cite{lp-2016}. We fix a pair of integers $(n,m)$ with $0< n\leq m$ and let $\mathbb F$ be the field of rational functions in $m$ variables over $\mathbb Q$. 

A {\it{ (LP) seed}} of rank $n$ in $\mathbb F$ is a pair $({\bf x},{\bf P})$, where
\begin{itemize}
    \item ${\bf x}= (x_1, \ldots, x_{m})$ is an ordered set of free generators of $\mathbb F$ over $\mathbb Q$;
	\item $\mathbf{P} = (P_1, \ldots, P_{n})$ is an ordered set of polynomials in $\mathbb Z[x_{n+1}^{\pm 1},\ldots, x_m^{\pm 1}][x_1, \ldots, x_{n}]$ satisfying: 
    
    (LP1) $P_i$ is irreducible in $\mathbb Z[x_{n+1}^{\pm 1},\ldots, x_m^{\pm 1}][x_1, \ldots, x_{n}]$;
    
    (LP2) $P_i$ is not divisible by any $x_j$ for $j\in[1,n]$;
	 
	(LP3) $P_i$ does not depend on $x_i$.
\end{itemize}
In this case, the ordered set ${\bf x}$ is called the {\it cluster} of the seed $({\bf x},{\bf P})$. The elements of ${\bf x}$ are called
{\it cluster variables}. More precisely, we call $x_1,\ldots,x_n$ {\em unfrozen cluster variables} and $x_{n+1},\ldots,x_{m}$ {\em frozen (cluster) variables}\footnote{The frozen variables are used to provide the ground ring $S\coloneqq \mathbb Z[x_{n+1}^{\pm 1},\ldots,x_m^{\pm 1}]$.}. The polynomials $P_1,\ldots,P_n$ are called {\em exchange polynomials}.

Let $P$ and $N$ be two rational functions in $\mathbb Q(x_1,\ldots,x_m)$. We denote by $P|_{x_j\leftarrow N}$ the expression obtained by substituting $x_j$ in $P$ by $N$, and denote by
\[ \mathbb Z[x_1,\ldots,\hat x_j,\ldots, x_n]\coloneqq \mathbb Z[x_1,\ldots,x_{j-1},x_{j+1},\ldots, x_n].
\]

Let $({\bf x},{\bf P})$ be a seed of rank $n$ in $\mathbb F$. The {\em ${\bf LP}$-mutation} of $({\bf x},{\bf P})$ in direction $k\in[1,n]$ is the new seed $({\bf x}', {\bf P}')\coloneqq \mu_k({\bf x},{\bf P})$ defined by the following sequence of steps:

\begin{enumerate}
\item We define an {\em exchange Laurent polynomial} $\widehat P_k=P_k/\prod_{j=1}^nx_j^{a_j}\in\mathbb Z[x_1^{\pm 1},\ldots,x_m^{\pm 1}]$, where 
\[
a_j\coloneqq \begin{cases}
    0,&j=k,\\
    {\rm max}\{s\in\mathbb N \mid \frac{P_k|_{x_j\leftarrow P_j/x_j}}{P_j^s} \in\mathbb Z[x_1^{\pm 1},\ldots,x_m^{\pm 1}]\}, &k\neq j\in [1,n].
\end{cases}
\]

\item The new cluster ${\bf x}' = (x_1', \ldots, x_m')$ is given by:
$$x_i' = \begin{cases}
x_i, &\text{if } i \neq k, \\
\widehat{P}_k/x_k, &\text{if } i = k.
\end{cases}$$

\item For $j\in[1,n]$, define $G_j:=P_j|_{x_k\leftarrow N_j}$, where $N_j=\frac{\widehat P_k|_{x_j\leftarrow 0}}{x_k^{\prime}}\in \mathbb Z[x_1'^{\pm 1},\ldots \hat x_j',\ldots, x_m'^{\pm 1}]$.
We know that $G_j\in \mathbb Z[x_1'^{\pm 1},\ldots  \hat x_j',\ldots, x_m'^{\pm 1}]$.

\item For $j\in[1,n]$, define $H_j=G_j/D_j\in \mathbb Z[x_1'^{\pm 1},\ldots  \hat x_j',\ldots, x_m'^{\pm 1}]$,
where $D_j$ is the greatest common divisor of $G_j$ and $\widehat P_k|_{x_j\leftarrow 0}$ in $\mathbb Z[x_1'^{\pm 1},\ldots  \hat x_j',\ldots, x_m'^{\pm 1}]$.
Note that $D_j$ is defined up to a unit in  $\mathbb Z[x_1'^{\pm 1},\ldots  \hat x_j',\ldots, x_m'^{\pm 1}]$.

\item For $j\in [1,n]$, define the new exchange polynomial $P'_j = M_j H_j$, where $M_j$ is the unique monoic Laurent monomial in $x_1',\ldots,x_n'$ such that 
$$P_j'\in \mathbb Z[x_{n+1}'^{\pm 1},\ldots,x_m'^{\pm 1}][x_1',\ldots,x_n']=\mathbb Z[x_{n+1}^{\pm 1},\ldots,x_m^{\pm 1}][x_1',\ldots,x_n']$$ and is not divisible by any $x_i'$ with $i\in[1,n]$. Note that $P_j'$ is defined up to a unit in $\mathbb Z[x_{n+1}^{\pm 1},\ldots,x_m^{\pm 1}]$.

\item The new seed is given by $ ({\bf x}', {\bf P}') = ((x_1',\ldots,x_m'), (P'_1, \ldots, P'_{n}))$.
\end{enumerate}

All the necessary existence and uniqueness conditions to show that the above mutation gives a unique valid seed can be found in \cite[\S 2]{lp-2016}. 
\begin{remark}\label{rmk:LP}
(i) If $P_j$ does not depend on $x_k$, then $G_j = H_j = P_j$ and $M_j = 1$, implying that $P_j' = P_j$. In this case, the well-definedness of $N_j$ is irrelevant.

(ii) If $\{P_1,\ldots,P_n\}\subseteq \mathbb Q_{\rm sf}({\bf x})$, we see that $ \mathbb Q_{\rm sf}({\bf x})= \mathbb Q_{\rm sf}({\bf x}')$. Moreover, we can choose the greatest common divisor $D_j$ in Step (4) such that the new exchange polynomial $P_j'$ belongs to $\mathbb Q_{\rm sf}({\bf x}')$ for $j\in[1,n]$.
\end{remark}

\begin{definition} [$LP$-pattern and positive $LP$-pattern] \hfill

(i) A {\em $LP$-pattern} $\mathcal S_{LP}=\{({\bf x}_t,  {\bf P}_t)\mid t\in \mathbb T_n\}$ in $\mathbb F$
	is an assignment of a seed $({\bf x}_t,  {\bf P}_t)$ to
 	every vertex $t$ of $\mathbb T_n$ such that $({\bf x}_{t'},   {\bf P}_{t'})$ is obtained from  $({\bf x}_t,  {\bf P}_t)$ by ${\bf LP}$-mutation in direction $k$ whenever
	\begin{xy}(0,1)*+{t}="A",(10,1)*+{t'}="B",\ar@{-}^k"A";"B" \end{xy} in $\mathbb T_n$.

    (ii) A $LP$-pattern  $\mathcal S_{LP}=\{({\bf x}_t,  {\bf P}_t)\mid t\in \mathbb T_n\}$ is said to be {\em positive}, if $\mathbb Q_{\rm sf}({\bf x}_t)=\mathbb Q_{\rm sf}({\bf x}_{t'})$ for any two vertices $t,t'\in\mathbb T_n$.
\end{definition}

We note that the frozen variables are the same in any two clusters of $\mathcal S_{LP}$. Thanks to Remark \ref{rmk:LP} (ii), examples of positive $LP$-patterns can be easily constructed.

\begin{definition}
    Let  $\mathcal S_{LP}=\{({\bf x}_t,  {\bf P}_t)\mid t\in \mathbb T_n\}$  be a $LP$-pattern in $\mathbb F$.
    
\begin{itemize}
    \item [(i)] The {\em Laurent phenomenon (LP) algebra} $\mathcal A$ associated with the $LP$-pattern $\mathcal S_{LP}$ is the $\mathbb Z$-subalgebra of $\mathbb F$ generated by all cluster variables (frozen and unfrozen), together with the inverse of frozen variables, i.e.,
 $$\mathcal A\coloneqq \mathbb Z[x_{1;t},\ldots,x_{n;t},x_{n+1;t}^{\pm 1},\ldots,x_{m;t}^{\pm 1}\mid t\in\mathbb T_n].$$
 \item[(ii)] The {\em upper Laurent phenomenon (LP) algebra} $\mathcal U$ associated with the $LP$-pattern $\mathcal S_{LP}$ is defined by the following intersection of Laurent polynomial algebras:
 \[
 \mathcal U\coloneqq \bigcap_{t\in\mathbb T_n}\mathbb Z[x_{1;t}^{\pm 1},\ldots,x_{m;t}^{\pm 1}].\]
 \item [(iii)] If the $LP$-pattern $\mathcal S_{LP}$ is positive, then it gives rise to a positive space
 \[\mathbb L\coloneqq (\mathbb F, \{{\bf x}_t\}_{t\in\mathbb T_n}).\]
 Such a positive space is called a {\em positive ${\bf LP}$-space}. 
\end{itemize}
\end{definition}
We see that the coordinate algebra $\mathcal U(\mathbb L)$ of a positive ${\bf LP}$-space $\mathbb L$ is exactly the corresponding upper LP algebra.

\begin{theorem}[{\cite[Theorem 5.1]{lp-2016}}] \label{LPLP}
Let $\mathcal{A}$ be a Laurent phenomenon algebra and
$z$ a cluster variable of $\mathcal A$. Then for any seed
$({\bf x}_t,{\bf P}_t)$ of $\mathcal{A}$, we have $z\in \mathbb Z[x_{1;t}^{\pm 1}, \ldots, x_{m;t}^{\pm 1}]$. In particular, $\mathcal A$ is contained in the corresponding upper LP algebra $\mathcal U$.
\end{theorem}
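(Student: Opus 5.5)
The statement is the Laurent phenomenon for LP algebras, quoted from \cite[Theorem 5.1]{lp-2016}. Within this paper it is used as an input rather than something to be reproved. Still, here is the route I would follow, modelled on the Fomin--Zelevinsky ``caterpillar'' argument as adapted by Lam and Pylyavskyy.

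\textbf{Reduction.} Since every cluster variable is obtained from a given seed $t$ by a finite sequence of mutations, it suffices to prove the following. For any seed $({\bf x}_{t_0},{\bf P}_{t_0})$ and any vertex $t$ at distance $d$ from $t_0$ in $\mathbb T_n$, every cluster variable of ${\bf x}_t$ lies in $\mathbb Z[x_{1;t_0}^{\pm1},\ldots,x_{m;t_0}^{\pm1}]$. I would prove this by induction on $d$.

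\textbf{Base cases.} The cases $d\le 1$ are immediate: $x_k'=\widehat P_k/x_k$, and $\widehat P_k$ is a Laurent polynomial by construction.

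\textbf{Inductive step.} For the induction I would consider a path $t_0 \overset{i}{-} t_1 \overset{j}{-} t_2 \cdots t$. By induction applied at $t_1$ and at $t_2$, the target variable $z$ is Laurent in ${\bf x}_{t_1}$ and in ${\bf x}_{t_2}$; the base seeds are $t_1,t_2$ rather than $t_0$ because the path passes through them. Substituting the exchange relations then writes $z$ as a Laurent polynomial in ${\bf x}_{t_0}$ divided by a power of $\widehat P_i$, and also divided by a power of the corresponding exchange Laurent polynomial coming from $t_2$.

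\textbf{Coprimality.} The core of the argument is to show that these two denominators are coprime in $\mathbb Z[x_{1;t_0}^{\pm1},\ldots,x_{m;t_0}^{\pm1}]$. This is where axioms (LP1)--(LP3) enter: irreducibility of the exchange polynomials, together with the normalization by the exponents $a_j$ and the gcd step (4) in the mutation, should guarantee that $\widehat P_i$ and the new exchange polynomial from $t_1$ to $t_2$ share no common irreducible factor. Since the Laurent polynomial ring over $\mathbb Z$ is a UFD, coprimality forces the denominator to be trivial.

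\textbf{Main obstacle.} I expect the coprimality and compatibility statement to be the hard part. One has to check that the substitution $x_k\leftarrow N_j$ and the division by $D_j$ in steps (3)--(4) produce exchange polynomials $P_j'$ for which the exchange relations at $t_1$ and $t_2$ are ``compatible''. This is the LP analogue of the rank-two and rank-three checks in the cluster case, and I would first establish it as a separate lemma on the two- and three-step mutation behaviour before running the induction.
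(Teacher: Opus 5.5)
\paragraph{Review.} The paper does not reprove this statement. It only quotes \cite[Theorem 5.1]{lp-2016}, whose proof adapts the Fomin--Zelevinsky argument you name. Your overall shape is right: induction on distance, closed by unique factorization in a Laurent polynomial ring. But the inductive step as you set it up cannot close.

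Write $L_s=\mathbb Z[x_{1;s}^{\pm1},\ldots,x_{m;s}^{\pm1}]$. Since the edge $t_1\overset{j}{-}t_2$ has $j\neq i$, the cluster ${\bf x}_{t_2}$ still contains $x_{i;t_1}=\widehat P_i/x_{i;t_0}$. So the ${\bf x}_{t_2}$-expansion of $z$, rewritten in ${\bf x}_{t_0}$, has powers of $\widehat P_i$ in its denominator too, not only powers of the exchange polynomial from $t_1$ to $t_2$. Hence $z\in L_{t_1}\cap L_{t_2}$ gives no control on the power of $\widehat P_i$. No coprimality lemma can repair this: $x_{i;t_1}^{-1}=x_{i;t_0}/\widehat P_i$ is a Laurent monomial in both clusters, so it lies in $L_{t_1}\cap L_{t_2}$. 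Yet it does not lie in $L_{t_0}$, because by (LP1)--(LP2) the polynomial $P_i$ is not a unit of $L_{t_0}$.

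The missing idea is to take as second seed the vertex $t_3$ with $t_2\overset{i}{-}t_3$, i.e.\ to mutate back in direction $i$.
\begin{enumerate}
\item The vertex $t_3$ lies at distance $d-1$ or $d-3$ from $t$, so your induction gives $z\in L_{t_1}\cap L_{t_3}$.
\item The cluster ${\bf x}_{t_3}$ differs from ${\bf x}_{t_0}$ only in $x_{j;t_2}$ and $x_{i;t_3}$; the variable $x_{i;t_1}$ is no longer present.
\item You then need two facts: (a) $x_{i;t_1},x_{j;t_2},x_{i;t_3}\in L_{t_0}$; (b) $x_{i;t_1}$ is coprime in $L_{t_0}$ to both $x_{j;t_2}$ and $x_{i;t_3}$.
\item Given (a) and (b), $z\in L_{t_0}[x_{i;t_1}^{-1}]\cap L_{t_0}[(x_{j;t_2}x_{i;t_3})^{-1}]=L_{t_0}$, because $L_{t_0}$ is a UFD.
\end{enumerate}
Proving (a) and (b) is where the three-step analysis along $i,j,i$ is needed, together with the specifics of LP-mutation: the exponents $a_j$, the substitution $x_k\leftarrow N_j$, and the division by $D_j$. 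The lemma you anticipate in your last paragraph is the right kind of statement, but it must be aimed at the pair $(t_1,t_3)$, not $(t_1,t_2)$.
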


\begin{example} Take ${\bf x}_{t_0}=(x_1,x_2)$ and ${\bf P}_{t_0}=(x_2+2,x_1+3)$. It can be checked that the $LP$-pattern defined by the initial seed $({\bf x}_{t_0},{\bf P}_{t_0})$ has only five seeds (up to permutation):
    \[
\begin{array}{rclcrcl}
\mathbf{x}_{t_0} &=& (x_1,\; x_2), & \qquad &
\mathbf{P}_{t_0} &=& (x_2+2,\;x_1+3), \\[8pt]
\mathbf{x}_{t_1} &=& (x_3, \;x_2), & \qquad &
\mathbf{P}_{t_1} &=& (x_2+2,\;3x_3+2), \\[8pt]
\mathbf{x}_{t_2} &=& (x_3,\;x_4), & \qquad &\mathbf{P}_{t_2} &=& (x_4+1,\;3x_3+2), 
\end{array}
\]
\[\begin{array}{rclcrcl}
\mathbf{x}_{t_3} &=& (x_5,\;x_4), & \qquad &\mathbf{P}_{t_3} &=& (x_4+1,\;2x_5+3),\\[8pt]
\mathbf{x}_{t_4} &=& (x_5,\;x_1), & \qquad &\mathbf{P}_{t_4} &=& (x_1+3,\;2x_5+3),
\end{array}
\]
\[\text{where}\quad\quad x_3:=   \frac{x_2+2}{x_1}, \;\;x_4:=   \frac{2x_1+3x_2+6}{x_1x_2}, \;\;x_5:=  \frac{x_1+3}{x_2}.\]
It can be seen that this $LP$-pattern is a positive $LP$-pattern.
\end{example}

Recall that a {\em cluster monomial} is a monomial in cluster variables from the same cluster.  Our general results on positive spaces yield the following results for LP algebras associated with positive $LP$-patterns, which seem to be new.

\begin{corollary}\label{cor:LP-side}
Let $\mathcal A$ be a LP algebra associated with a 
positive $LP$-pattern, and let $\mathbb L=(\mathbb F, \{{\bf x}_t\}_{t\in\mathbb T_n})$ be the corresponding positive ${\bf LP}$-space. Fix a cluster ${\bf x}_{t_0}$. Then:
\begin{itemize}
    \item[(i)] If $u$ is a cluster monomial and  $u\notin \{{\bf x}_{t_0}^{\bf h}\mid {\bf h}\in\mathbb N^m\}$, then the Laurent expansion of $u$ with respect to ${\bf x}_{t_0}$ is a linear combination of proper Laurent monomials in ${\bf x}_{t_0}$.
    \item [(ii)] The cluster monomials of $\mathcal A$ are linearly independent over $\mathbb Z$.
\end{itemize}
\end{corollary}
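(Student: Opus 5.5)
The plan mirrors the proof of Corollary \ref{cor:A-side}: verify that every cluster monomial is a global monomial on the positive ${\bf LP}$-space $\mathbb L$, and that the chart ${\bf x}_{t_0}$ satisfies the hypothesis of Corollary \ref{cor:proper} (ii). Then apply Corollary \ref{cor:proper} and Theorem \ref{thm:linear-indp}.

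\emph{Cluster monomials are global monomials.} Let $u={\bf x}_t^{\bf h}$ with ${\bf h}\in\mathbb N^m$, a monomial in the cluster ${\bf x}_t$. By Theorem \ref{LPLP}, every cluster variable $x_{i;t}$ lies in $\mathbb Z[x_{1;t'}^{\pm1},\ldots,x_{m;t'}^{\pm1}]$ for every $t'\in\mathbb T_n$. Hence the product $u$ lies in the intersection $\mathcal U(\mathbb L)$. Since $u$ is a Laurent monomial in the chart ${\bf x}_t$, it belongs to $\mathcal U^{\rm mono}(\mathbb L)$.

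\emph{Part (i).} Take $M(t_0)$ to be the set of global monomials in the chart ${\bf x}_{t_0}$. By the previous step, $\{{\bf x}_{t_0}^{\bf h}\mid {\bf h}\in\mathbb N^m\}\subseteq M(t_0)$. Suppose $u$ is a cluster monomial with $u\notin\{{\bf x}_{t_0}^{\bf h}\mid{\bf h}\in\mathbb N^m\}$. We also need $u\notin M(t_0)$, and this is the one point requiring care. Assume instead $u={\bf x}_{t_0}^{\bf g}$ for some ${\bf g}\in\mathbb Z^m$ with some $g_i<0$. Since $u$ is in fact a cluster monomial, it has a Laurent expansion with nonnegative exponents in its own cluster, and is literally equal to ${\bf x}_{t_0}^{\bf g}$.

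\begin{itemize}
\item If $g_i<0$ for a frozen index $i$, the statement implicitly allows frozen variables to be inverted. To be safe, I would interpret (i) with $\mathbb N^m$ and exclude this case by noting that frozen variables are common to all clusters. A monomial in $t$ with nonnegative frozen exponents, when equal to a monomial in ${\bf x}_{t_0}$, has matching frozen exponents, by comparing degrees in the frozen variables. This comparison is not fully rigorous, which is why I expect this to be the main subtlety.
\item For the cleaner route, I would instead invoke Theorem \ref{thm:polytope} directly. If some monomial ${\bf x}_{t_0}^{\bf k}$ with ${\bf k}\in\mathbb N^m$ appeared in the expansion of $u$, then $\{{\bf k}\}\subseteq\mathsf P_u^{t_0}$. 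Since ${\bf x}_{t_0}^{\bf k}$ is a global monomial, Theorem \ref{thm:polytope} forces $u={\bf x}_{t_0}^{\bf k}$, contradicting the hypothesis.
\end{itemize}

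This direct argument is exactly the proof of Corollary \ref{cor:proper} (i), applied to the subset $\{{\bf x}_{t_0}^{\bf h}\mid{\bf h}\in\mathbb N^m\}$. It sidesteps the question of whether $u\in M(t_0)$. It shows that no non-proper Laurent monomial occurs in the expansion of $u$, which is precisely (i).

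\emph{Part (ii).} Distinct cluster monomials are distinct elements of $\mathcal U^{\rm mono}(\mathbb L)$, so Theorem \ref{thm:linear-indp} gives linear independence over $\mathbb Z$ immediately.
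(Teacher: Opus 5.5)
Your proposal is correct and follows the paper's proof. The Laurent phenomenon (Theorem \ref{LPLP}) makes every cluster monomial a global monomial on $\mathbb L$, and then Corollary \ref{cor:proper} and Theorem \ref{thm:linear-indp} give (i) and (ii). Your second-bullet argument reruns the proof of Corollary \ref{cor:proper}(i) with $\{{\bf x}_{t_0}^{\bf h}\mid {\bf h}\in\mathbb N^m\}$ in place of $M(t_0)$. This cleanly closes the gap between the hypothesis $u\notin\{{\bf x}_{t_0}^{\bf h}\mid{\bf h}\in\mathbb N^m\}$ and the hypothesis $u\notin M(t_0)$ that the paper does not address. The gap is harmless anyway: if $u\in M(t_0)$ has a negative exponent, its expansion is a single proper monomial. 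So you can drop your non-rigorous first bullet entirely.
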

\begin{proof}
Thanks to the Laurent phenomenon, we know that each cluster monomial in $\mathcal A$ is a global monomial on $\mathbb L$. Then
the results follow from Corollary \ref{cor:proper} and Theorem \ref{thm:linear-indp}.
\end{proof}

Finally, we give a description of the global monomials on positive ${\bf LP}$-spaces.

\begin{proposition}
Let  $\mathbb L=(\mathbb F, \{{\bf x}_t\}_{t\in\mathbb T_n})$ be a positive ${\bf LP}$-space  associated with a positive $LP$-pattern  $\mathcal S_{LP}=\{({\bf x}_t,  {\bf P}_t)\mid t\in \mathbb T_n\}$, and  let $\mathcal U^{\rm mono}(\mathbb L)$ be the set of global monomials on $\mathbb L$.  Then we have
\[\mathcal U^{\rm mono}(\mathbb L)=\bigcup_{t\in\mathbb T_n}\{ {\bf x}_t^{{\bf h}}\mid {\bf h}=(h_1,\ldots,h_m)^T\in\mathbb Z^m\;\;\text{with}\;\;h_k\geq 0\;\;\text{for}\;\;k\in[1,n]\}.
\]
In particular, global monomials on $\mathbb L$ and cluster monomials in $\mathcal U(\mathbb L)$ are the same up to frozen factors.
\end{proposition}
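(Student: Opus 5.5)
The plan is to mirror the proof of Proposition~\ref{pro:global-A-monomial}, establishing the two inclusions separately.

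For the containment $\supseteq$, I will use the Laurent phenomenon for LP algebras (Theorem~\ref{LPLP}). Each unfrozen cluster variable $x_{k;t}$ with $k\in[1,n]$ lies in every Laurent polynomial ring $\mathbb Z[x_{1;t'}^{\pm1},\ldots,x_{m;t'}^{\pm1}]$. The frozen variables are common to all clusters, so they and their inverses are Laurent monomials in every chart. Consequently, any ${\bf x}_t^{\bf h}$ with $h_k\ge 0$ for $k\in[1,n]$ is a product of elements of $\mathcal U(\mathbb L)$, and hence is a global monomial.

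For the containment $\subseteq$, let $u={\bf x}_t^{\bf h}$ be a global monomial and suppose $h_k<0$ for some $k\in[1,n]$. I will mutate in direction $k$ to obtain $({\bf x}_{t'},{\bf P}_{t'})=\mu_k({\bf x}_t,{\bf P}_t)$. In the new chart we have $x_{i;t}=x_{i;t'}$ for $i\ne k$ and $x_{k;t}=\widehat P_{k;t}/x_{k;t'}$. Here $\widehat P_{k;t}=P_{k;t}/\prod_j x_{j;t}^{a_j}$ does not involve $x_{k;t}$ by (LP3), so it is a Laurent polynomial in the variables $x_{i;t'}$ with $i\neq k$. Then
\[
u=\Bigl(\prod_{i\ne k}x_{i;t'}^{h_i}\Bigr)\, x_{k;t'}^{-h_k}\,\widehat P_{k;t}^{\,h_k},
\]
and since $h_k<0$ this contains the factor $\widehat P_{k;t}^{-|h_k|}$.

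The key step is to show that $\widehat P_{k;t}$ is not a unit in the Laurent ring $\mathbb Z[x_{1;t'}^{\pm1},\ldots,x_{m;t'}^{\pm1}]$. Once that is established, the expression above is not a Laurent polynomial, because this Laurent ring is a UFD whose units are the monomials times $\pm1$, and this contradicts $u\in\mathcal U(\mathbb L)$.

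I expect the main obstacle to be this non-unit claim. I will argue it as follows. $P_{k;t}$ is irreducible in $S[x_{1;t},\ldots,x_{n;t}]$ by (LP1) and is not divisible by any $x_{j;t}$ by (LP2), so it is not a monomial times a unit of $S$. Dividing by the monomial $\prod_j x_{j;t}^{a_j}$ does not change whether an element is a unit of the Laurent ring. The change of variables replaces only $x_{k;t}$, on which $\widehat P_{k;t}$ does not depend, so it is the same element written in the variables of ${\bf x}_{t'}$ other than $x_{k;t'}$.

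There is one degenerate case to rule out: $P_{k;t}$ could itself be a Laurent monomial in the frozen variables, that is, a unit of $S$. To handle it, I would use the convention in \cite{lp-2016} that exchange polynomials are irreducible, hence non-units. If needed, I would add a remark that positivity, $P_{k;t}\in\mathbb Q_{\rm sf}({\bf x}_t)$ in the positive setting, is not required for this argument.

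Finally, the statement that global monomials and cluster monomials coincide up to frozen factors is immediate from the displayed description.
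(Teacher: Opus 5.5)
Your proposal is correct and takes the same route as the paper, whose proof simply says it mirrors Proposition~\ref{pro:global-A-monomial}. There, the Laurent phenomenon gives $\supseteq$, and a single mutation in a direction $k$ with $h_k<0$, combined with the non-invertibility of the exchange Laurent polynomial, gives $\subseteq$. Your argument that $\widehat P_{k;t}$ is not a unit, deduced from (LP1)--(LP3), supplies exactly the detail the paper leaves implicit.
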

\begin{proof}
The proof is similar to that of Proposition \ref{pro:global-A-monomial}.
\end{proof}

\section*{Acknowledgements}
The author would like to thank Antoine de Saint Germain and Jiang-Hua Lu for their collaboration for the paper \cite{cdsgl-2023}. The definition of positive spaces used in this work is taken from that paper.

\bibliography{myref}

\begin{bibdiv}
\begin{biblist}

\bib{bfz_2005}{article}{
      author={Berenstein, A.},
      author={Fomin, S.},
      author={Zelevinsky, A.},
       title={{Cluster algebras {III}: Upper bounds and double Bruhat cells}},
        date={2005},
     journal={Duke Math. J.},
      volume={126},
      number={1},
       pages={1\ndash 52},
         url={https://doi.org/10.1215/S0012-7094-04-12611-9},
}

\bib{BK-2012}{article}{
      author={Baumann, P.},
      author={Kamnitzer, J.},
       title={Preprojective algebras and {MV} polytopes},
        date={2012},
        ISSN={1088-4165},
     journal={Represent. Theory},
      volume={16},
       pages={152\ndash 188},
}

\bib{Cao-2023}{unpublished}{
      author={Cao, P.},
       title={{F-invariant in cluster algebras}},
        date={2023},
        note={arXiv:2306.11438v4},
}

\bib{cao-2026b}{unpublished}{
      author={Cao, P.},
       title={{Newton polytopes in cluster algebras and $\tau$-tilting theory}},
        date={2026},
        note={arXiv:2602.07929},
}

\bib{cdsgl-2023}{unpublished}{
      author={Cao, P.},
      author={de~St.~Germain, A.},
      author={Lu, J.H.},
       title={{Tropical friezes and cluster-additive functions via Fock-Goncharov duality and a conjecture of Ringel}},
        date={2023},
        note={arXiv:2311.17712},
}

\bib{CGGLSS-2025}{article}{
      author={Casals, R.},
      author={Gorsky, E.},
      author={Gorsky, M.},
      author={Le, I.},
      author={Shen, L.},
      author={Simental, J.},
       title={Cluster structures on braid varieties},
        date={2025},
     journal={J. Amer. Math. Soc.},
      volume={38},
      number={2},
       pages={369\ndash 479},
}

\bib{ck-2008}{article}{
      author={Caldero, P.},
      author={Keller, B.},
       title={{From triangulated categories to cluster algebras}},
        date={2008},
     journal={Invent. Math.},
      volume={172},
       pages={169\ndash 211},
}

\bib{cs-2014}{article}{
      author={Chekhov, L.},
      author={Shapiro, M.},
       title={Teichm\"uller spaces of {R}iemann surfaces with orbifold points of arbitrary order and cluster variables},
        date={2014},
     journal={Int. Math. Res. Not. IMRN},
      number={10},
       pages={2746\ndash 2772},
}

\bib{dtw-2025}{article}{
      author={e~Moura, G. Z.~Dantas},
      author={Kandalam, R. C.~Telekicherla},
      author={Woodruff, D.},
       title={Cluster monomials in graph {L}aurent phenomenon algebras},
        date={2025},
     journal={Algebr. Comb.},
      volume={8},
      number={4},
       pages={997\ndash 1019},
}

\bib{fg-2009}{article}{
      author={Fock, V.},
      author={Goncharov, A.},
       title={{Cluster ensembles, quantization and the dilogarithm}},
        date={2009},
     journal={{Ann. Sci. \'{E}c. Norm. Sup\'{e}r.}},
      volume={42},
      number={6},
       pages={865\ndash 930},
}

\bib{fz_2002}{article}{
      author={Fomin, S.},
      author={Zelevinsky, A.},
       title={{Cluster algebras {I}: Foundations}},
        date={2002},
     journal={J. Amer. Math. Soc.},
      volume={15},
       pages={497\ndash 529},
}

\bib{fz-2003-notes}{incollection}{
      author={Fomin, S.},
      author={Zelevinsky, A.},
       title={Cluster algebras: notes for the {CDM}-03 conference},
        date={2003},
   booktitle={Current developments in mathematics, 2003},
   publisher={Int. Press, Somerville, MA},
       pages={1\ndash 34},
}

\bib{fomin_zelevinsky_2007}{article}{
      author={Fomin, S.},
      author={Zelevinsky, A.},
       title={Cluster algebras {IV}: Coefficients},
        date={2007},
     journal={Compos. Math.},
      volume={143},
      number={1},
       pages={112–164},
}

\bib{ghkk18}{article}{
      author={Gross, M.},
      author={Hacking, P.},
      author={Keel, S.},
      author={Kontsevich, M.},
       title={Canonical bases for cluster algebras},
        date={2018},
     journal={J. Amer. Math. Soc.},
      volume={31},
      number={2},
       pages={497\ndash 608},
}

\bib{GKZ-1994}{book}{
      author={Gelfand, I.~M.},
      author={Kapranov, M.~M.},
      author={Zelevinsky, A.~V.},
       title={Discriminants, resultants, and multidimensional determinants},
      series={Mathematics: Theory \& Applications},
   publisher={Birkh\"auser Boston, Inc., Boston, MA},
        date={1994},
        ISBN={0-8176-3660-9},
}

\bib{gls_2011}{article}{
      author={Gei{\ss}, C.},
      author={Leclerc, B.},
      author={Schr\"{o}er, J.},
       title={Kac-{M}oody groups and cluster algebras},
        date={2011},
        ISSN={0001-8708},
     journal={Adv. Math.},
      volume={228},
      number={1},
       pages={329\ndash 433},
         url={https://www.sciencedirect.com/science/article/pii/S0001870811001617},
}

\bib{GLSB-2026}{article}{
      author={Galashin, P.},
      author={Lam, T.},
      author={Sherman-Bennett, M.},
       title={Braid variety cluster structures, {II}: general type},
        date={2026},
     journal={Invent. Math.},
      volume={243},
      number={3},
       pages={1079\ndash 1127},
}

\bib{Huang-2019}{article}{
      author={Huang, M.},
       title={Proper {L}aurent monomial property of acyclic cluster algebras},
        date={2019},
     journal={Comm. Algebra},
      volume={47},
      number={9},
       pages={3520\ndash 3526},
}

\bib{cklp-2013}{article}{
      author={Irelli, G.~Cerulli},
      author={Keller, B.},
      author={Labardini-Fragoso, D.},
      author={Plamondon, P.-G.},
       title={Linear independence of cluster monomials for skew-symmetric cluster algebras},
        date={2013},
     journal={Compos. Math.},
      volume={149},
      number={10},
       pages={1753\ndash 1764},
}

\bib{cl-2012}{article}{
      author={Irelli, G.~Cerulli},
      author={Labardini-Fragoso, D.},
       title={Quivers with potentials associated to triangulated surfaces, {P}art {III}: tagged triangulations and cluster monomials},
        date={2012},
     journal={Compos. Math.},
      volume={148},
      number={6},
       pages={1833\ndash 1866},
}

\bib{Scott-2006}{article}{
      author={J.~S., Scott},
       title={Grassmannians and cluster algebras},
        date={2006},
     journal={Proc. London Math. Soc. (3)},
      volume={92},
      number={2},
       pages={345\ndash 380},
}

\bib{lp-2016}{article}{
      author={Lam, T.},
      author={Pylyavskyy, P.},
       title={Laurent phenomenon algebras},
        date={2016},
     journal={Camb. J. Math.},
      volume={4},
      number={1},
       pages={121\ndash 162},
}

\bib{Schneider_2013}{book}{
      author={Schneider, R.},
       title={{Convex Bodies: The Brunn–Minkowski Theory}},
     edition={2},
      series={Encyclopedia of Mathematics and its Applications},
   publisher={Cambridge University Press},
        date={2013},
}

\end{biblist}
\end{bibdiv}
\end{document}